\documentclass[11pt,oneside,a4paper]{article}
\usepackage{hyperref}
\usepackage{subcaption}
\usepackage{graphicx}
\usepackage{amsfonts}
\usepackage{graphicx}
\usepackage{epstopdf}
\usepackage{algorithmic}
\usepackage{mismath}
\usepackage{cancel}
\usepackage{amsmath}
\usepackage{amsthm}
\usepackage{float}

\usepackage{amssymb}
\usepackage{todonotes}
\newcommand{\dd}{\mathrm{d}}

\newcommand{\demi}{\frac{1}{2}}
\newcommand{\m}{\mathbf{m}}
\newcommand{\yu}{\mathbf{u}}

\newcommand{\ro}{\varrho}
\newcommand{\rhoti}{\tilde{\rho}}
\newcommand{\yuti}{\tilde{\yu}}

\newcommand{\real}{\mathbb{R}}

\newcommand{\nat}{\mathbb{N}}
\newcommand{\TT}{{\mathbb{T}^2}}

\newcommand{\Ensk}{\mathcal{E}_{\textnormal{NSK}}}
\newcommand{\ENSK}{E_{\textnormal{NSK}}}
\newcommand{\Ld}{\Delta_h}

\newcommand{\fx}{\nabla^+_x}
\newcommand{\fy}{\nabla^+_y}
\newcommand{\cx}{\nabla^c_x}
\newcommand{\cy}{\nabla^c_y}
\newcommand{\bx}{\nabla^-_x}
\newcommand{\by}{\nabla^-_y}

\newcommand{\bm}{\mathbf{m}}
\newcommand{\convws}{\stackrel{\ast}{\rightharpoonup}}
\newcommand{\ppsi}{\tilde{\boldsymbol{\psi}}}

\ifpdf
\DeclareGraphicsExtensions{.eps,.pdf,.png,.jpg}
\else
\DeclareGraphicsExtensions{.eps}
\fi

\title{Convergence of a Finite Volume Scheme for the Navier-Stokes-Korteweg Model via Dissipative Solutions
	\thanks{All authors were supported by the DFG within the priority research program SPP 2410, project 
		525866748. 
}}
\author{Jan Giesselmann\thanks{Department of Mathematics, Technische Universität Darmstadt, Germany ({giesselmann@mathematik.tu-darmstadt.de})}
	\and  Philipp \"Offner\thanks{Institute of Mathematics, Clausthal University of Technology, Germany, ({philipp.oeffner@tu-clausthal.de})}
	\and Robert Sauerborn\thanks{Institute of Mathematics, Clausthal University of Technology, Germany, ({robert.sauerborn@tu-clausthal.de})}
}

	\newcommand{\bu}{\mathbf{u}}
	\newcommand{\roti}{\tilde{\ro}}

\newcommand{\be}{\boldsymbol{e}}
\newcommand{\Dhpx}{D^+_{h,x}}
\newcommand{\Dhpy}{D^+_{h,y}}

\newcommand{\Dhmx}{D^-_{h,x}}
\newcommand{\Dhmy}{D^-_{h,y}}

\newcommand{\Dhcx}{D^c_{h,x}}
\newcommand{\Dhcy}{D^c_{h,y}}
\theoremstyle{definition}
\newtheorem{definition}{Definition}
\newtheorem{assumption}[definition]{Assumption}

\newtheorem{lemma}[definition]{Lemma}
\newtheorem{remark}[definition]{Remark}
\newtheorem{theorem}[definition]{Theorem}

\ifpdf
\hypersetup{
  pdftitle={Convergence of a Finite Volume Scheme for the Navier-Stokes-Korteweg Model via Dissipative Solutions},
  pdfauthor={J.\ Giesselmann, P.\ Öffner, and R. Sauerborn}
}
\fi

\begin{document}

\maketitle

\begin{abstract}

We propose a concept of dissipative weak (DW) solutions for the Navier–Stokes–Korteweg (NSK) system and prove conditional convergence of a structure-preserving finite volume scheme (FV)  towards such a solution. DW solutions provide a generalized solution concept in computational fluid dynamics and have recently attracted significant attention. They provide an extension of the famous Lax Equivalence Theorem to nonlinear problems, i.e. consistency and stability of a numerical scheme imply convergence.
Our work builds on recent advances where convergence towards DW solutions of structure-preserving schemes has been established for the Euler and Navier–Stokes equations. Indeed, we prove convergence of a recently proposed FV scheme by leveraging its conservation and  dissipation properties as well as its consistency.

\end{abstract}



\section{Introduction} 
\label{sec:introduction} 

We prove conditional convergence of a semi-discrete (in space) numerical scheme for the (local) isothermal Navier-Stokes-Korteweg (NSK) system
\begin{equation}\label{eq_Korteweg}
	\begin{aligned}
		\partial_t \rho+ \nabla \cdot  (\rho u) =&0  \\
		\partial_t (\rho \bu) + \nabla \cdot \left[  \rho \bu \otimes \bu +p(\rho) I  \right]  =&  \mu \Delta \bu + \kappa \nabla \cdot \left( \nabla \cdot (\rho \nabla\rho)\mathbf{I} - \frac12 |\nabla \rho|^2 \mathbf{I}\right)\\
		 & - \kappa \nabla \cdot \left( \nabla \rho \otimes \nabla \rho \right)
	\end{aligned}
\end{equation}
where the unknowns are the fluid density $\rho$ and the fluid velocity $\bu$. The pressure law $p=p(\rho)$ and the (constant) coefficients of viscosity, $\nu>0,$   and of capillarity, $\kappa >0$, are given.
For simplicity, as in \cite{GÖS}, we use $\nabla \bu$ as viscosity tensor.
The NSK system is a well known higher order system of PDEs that is frequently used to model multi-phase flows, thin film flows and can also be used for single phase flows, where, arguably, it captures more physical phenomena than the Navier-Stokes equations \cite{SlemrodReview}. We call \eqref{eq_Korteweg} the Euler-Korteweg (EK) system if $\mu=0$.\\
In terms of well-posedness of the local NSK system, we would like to mention existence of weak solutions if viscosity and capillarity are related in a specific way \cite{Antonelli2019GlobalEO} and well-posedness in Besov spaces for an NSK system with damping \cite{zbMATH07077618}. For the Euler-Korteweg-Poisson system non-uniqueness of weak entropy solutions was shown in \cite{WellposedKorteweg} using convex integration. This is expected to also hold for the EK system. Weak-strong uniqueness for weak entropy solutions of the EK system was shown, even for non-monotone pressure, in \cite{zbMATH06685093}. In this work, we consider the case of single phase flows, i.e.,  we focus on the case of a continuously differentiable and strictly
monotone pressure law $p$. This makes the first--order
part of the system strictly hyperbolic and ensures convexity of the associated
potential energy $P$, defined by $P'(\rho)\,\rho - P(\rho) = p(\rho).$
In particular, we will assume the polytropic pressure law 
$$p(\rho) = a \rho^\gamma \txt{with} a > 0, \gamma > 1 $$ holds. 

For the NSK system, a variety of numerical schemes have been presented in the past. Let us mention \cite{noblevilaEKthinfilmflow} which reduces the model order by introducing an auxiliary variable, \cite{dhaoadi11040876} using a hyperbolic relaxation, \cite{zbMATH06818400} using an elliptic relaxation,  \cite{zbMATH08071323} which is a local discontinuous Galerkin method, and \cite{GÖS} which is a finite volume scheme on Cartesian meshes.
We study the scheme derived  in \cite{GÖS}. It is particularly suitable for convergence analysis since it conserves mass and momentum exactly and dissipates energy. Moreover,  in contrast to most other schemes for NSK,
it discretises the equations directly without introducing auxiliary variables. Due to the strong nonlinearity of the equations a convergence analysis is very challenging for any of the schemes. Convergence results have only recently been obtained in the framework of energy-variational solutions in \cite{eiter2026}, and this, to our knowledge, is the first convergence analysis in the framework of dissipative solutions.

The approach we are taking is to prove a priori estimates on the numerical solutions in order to guarantee the existence of a weakly-$*$ convergent subsequence. Inspired by the works \cite{feireisl2021numerics}, we show convergence (along the above mentioned subsequence) of our numerical solutions towards dissipative solutions, provided the density of the numerical solutions is uniformly bounded from below, similar to the barotropic Euler case.
Indeed, based on the convergence properties of the scheme we will suggest a definition of dissipative solution for the NSK system. These can be interpreted as expected values of dissipative  measure valued solutions and satisfy the weak form of the equations up to defects.    We  will show that dissipative solutions of this type satisfy a dissipative-weak-strong uniqueness principle and, thus, constitute a meaningful generalization of standard solution concepts.
The remainder of this paper is structured as follows: In Section 2, we present the numerical scheme and recall its main properties from \cite{GÖS}. In Section 3, we define dissipative weak solutions and prove dissipative-weak-strong uniqueness of these solutions. Finally, in Section 4, we derive a priori estimates for numerical solutions, use them to pass to the limit and identify the limit as a dissipative solution.
 
\section{Semidiscrete Method}


In this section, we present the structure-preserving semi-discrete finite volume (FV) scheme introduced in \cite{GÖS},
whose convergence we aim to prove in this work.
We begin by specifying the notation and the general framework.

The computational domain is given by 
$
\mathbb{T}^2 = (\real / \mathbb{Z}) \times (\real / 
\mathbb{Z})
$
and divided into control volumes (cells) given by
$
\Omega_{i,j}
= (x_{i-\tfrac{1}{2}}, x_{i+\tfrac{1}{2}}]
\times
(y_{j-\tfrac{1}{2}}, y_{j+\tfrac{1}{2}}]
$,
with the cell interfaces defined by
$$
x_{i\pm \tfrac{1}{2}} = \left( i \pm \tfrac{1}{2}\right)  h_x, \quad
y_{j\pm \tfrac{1}{2}} = \left( j \pm \tfrac{1}{2}\right)  h_y
$$
where $h_x = M^{-1}, h_y = N^{-1}$ are the mesh sizes; for some $M,N \in \nat$.

%

For piecewise constant grid functions $\phi$ defined on this mesh, we introduce the following discrete operators:
\begin{align*}
	\Ld(\phi_{i,j}) 
	&\coloneqq \Delta_{h_x}\phi_{i,j} + \Delta_{h_y}\phi_{i,j} \\
	&\coloneqq \frac{\phi_{i+1,j}-2\phi_{i,j}+\phi_{i-1,j}}{h_x^2}
	+ \frac{\phi_{i,j+1}-2\phi_{i,j}+\phi_{i,j-1}}{h_y^2}, \\
	\nabla^c\phi_{i,j} 
	&\coloneqq		
	\begin{pmatrix}
		\cx\phi_{i,j} \\
		\cy\phi_{i,j}
	\end{pmatrix}
	\coloneqq
	\begin{pmatrix}
		\dfrac{\phi_{i+1,j} - \phi_{i-1,j}}{2h_x} \\
		\dfrac{\phi_{i,j+1} - \phi_{i,j-1}}{2h_y}
	\end{pmatrix}, \\
	\nabla^\pm\phi_{i,j} 
	&\coloneqq 
	\begin{pmatrix}
		\nabla^\pm_x\phi_{i,j} \\
		\nabla^\pm_y\phi_{i,j}
	\end{pmatrix}
	\coloneqq 
	\pm 
	\begin{pmatrix}
		\dfrac{\phi_{i\pm 1,j} - \phi_{i,j}}{h_x} \\
		\dfrac{\phi_{i,j \pm 1} - \phi_{i,j}}{h_y}
	\end{pmatrix}.
\end{align*}

Periodic boundary conditions on $\TT$ are imposed by identifying
\[
\phi_{i+kM,\, j+lN} \coloneqq \phi_{i,j}
\qquad \text{for all } k,l \in \mathbb{Z}.
\]
The initial data are defined via cell averages, see \eqref{eq:initCon}.

The semidiscrete FV scheme for the NSK system \eqref{eq_Korteweg}
in two spatial dimensions, written in terms of the conserved variables
$(\rho,\rho \yu) = (\rho,\rho u,\rho v)$, presented in \cite{GÖS}, is given by
\begin{subequations}\label{method:semi2d}
\begin{align}
- \frac{\mathrm d}{\mathrm dt}  \rho_{i,j}
&= \cx(\rho u)_{i,j}+ \cy(\rho v)_{i,j}
   - h\lambda \Ld\rho_{i,j},
\label{semi2drho}
\\
- \frac{\mathrm d}{\mathrm dt}  (\rho u)_{i,j}
&= \cx(\rho u^2)_{i,j}+ \cy(\rho u v)_{i,j}
   + \cx p(\rho_{i,j})
   - \mu \Ld(u_{i,j})
   - h\lambda \Ld(\rho u)_{i,j}
\nonumber\\
&\quad
- \kappa \Bigg[
\bx \frac{\rho_{i,j}\Ld\rho_{i+1,j}
      + \rho_{i+1,j}\Ld\rho_{i,j}}{2}
- \frac12 \bx \big(\fx\rho_{i,j}\big)^2
\label{semi2du}
\\
&\qquad
+ \frac12 \bx\!\left(
      \by\rho_{i+1,j}\by \rho_{i,j}\right)
- \by \!\left(
      \cx\rho_{i,j}\,\fy \rho_{i,j}\right)
\Bigg],
\nonumber\\
- \frac{\mathrm d}{\mathrm dt}  (\rho v)_{i,j}
&= \cy(\rho v^2)_{i,j}+ \cx(\rho u v)_{i,j}
   + \cy p(\rho_{i,j})
   - \mu \Ld(v_{i,j})
   - h\lambda \Ld(\rho v)_{i,j}
\nonumber\\
&\quad
- \kappa \Bigg[
\by \frac{\rho_{i,j}\Ld\rho_{i,j+1}
      + \rho_{i,j+1}\Ld\rho_{i,j}}{2}
- \frac12 \by \big(\fy\rho_{i,j}\big)^2
\label{semi2dv}
\\
&\qquad
+ \frac12 \by\!\left(
      \bx\rho_{i,j+1}\,\bx \rho_{i,j}\right)
- \bx \!\left(
      \cy\rho_{i,j}\, \fx \rho_{i,j}\right)
\Bigg].
\nonumber
\end{align}
\end{subequations}

In the artificial dissipation terms, we set $h := \max\{h_x,h_y\}$ and
\begin{equation}
	\label{eq:lambda}
	\lambda = \max \left\lbrace \demi \max_{i,j} \left\lbrace |\yu_{i,j}| + \sqrt{p'(\rho_{i,j})}\right\rbrace, \delta\right\rbrace, \txt{with $0 < \delta \ll 1$.}
\end{equation}
For the initial data we assume $(\rho_0,\mathbf u_0): \mathbb{T}^2 \to \real \times \real^2$ are measurable
with
$\rho_0 \ge \underline{\rho} > 0$ and finite total energy,
\[
 \Ensk(\rho_0,\mathbf u_0) < \infty
\]
where
\begin{equation}\label{def:energyrhom}
 \Ensk(\rho,\mathbf u) \coloneqq  \int_\TT \ENSK(\rho,\mathbf u) \di x, \qquad \ENSK(\rho,\mathbf u) \coloneqq P(\rho) + \frac12 \rho |\yu|^2 + \frac{\kappa}{2} |\nabla \rho|^2.
\end{equation}
A discrete counterpart of the energy functional $\mathcal{E}$ is defined by
\begin{align}\label{eq:energy2d}
\Ensk^h(\rho, \yu)
=
\frac{1}{MN}\sum_{i=1}^{M}\sum_{j=1}^{N}
\left(
\frac{1}{2} \rho_{i,j} |\mathbf u_{i,j}|^2
+ P(\rho_{i,j})
+ \frac{\kappa}{2} \left|\nabla_h^+ \rho_{i,j}\right|^2
\right),
\end{align}

We further introduce the piecewise constant reconstructions
$(\rho^h,\mathbf u^h)$  defined by
\[
\rho^h(t,x)
=\sum_{i=1}^{M}\sum_{j=1}^{N}
\rho_{i,j}(t)\mathbf 1_{\Omega_{i,j}}(x),
\quad
\mathbf{u}^h(t,x)
=\sum_{i=1}^{M}\sum_{j=1}^{N}
\mathbf{u}_{i,j}(t)\mathbf 1_{\Omega_{i,j}}(x).
\]

For functions $f:\mathbb{T}^2\to\mathbb R$ we define the discrete
forward, backward and central difference operators by
\[
\begin{aligned}
D_{h,x}^{\pm} f(x,y)
&=
\pm\frac{f(x\pm h_x,y)-f(x,y)}{h_x},
&
D_{h,x}^{c} f(x,y)
&=
\frac{f(x+h_x,y)-f(x-h_x,y)}{2h_x},
\\
D_{h,y}^{\pm} f(x,y)
&=
\pm\frac{f(x,y\pm h_y)-f(x,y)}{h_y},
&
D_{h,y}^{c} f(x,y)
&=
\frac{f(x,y+h_y)-f(x,y-h_y)}{2h_y}.
\end{aligned}
\]

Moreover, we define
\[
D_h^{\pm} f
\coloneqq
\big(D_{h,x}^{\pm}f, D_{h,y}^{\pm}f\big)^T,
\qquad
D_h^{c} f
\coloneqq
\big(D_{h,x}^{c}f, D_{h,y}^{c}f\big)^T,
\]
and
\[
D_h^2 f
\coloneqq
D_{h,x}^{-}D_{h,x}^{+}f
+
D_{h,y}^{-}D_{h,y}^{+}f.
\]
We will also use $D^\bullet_{h}$ in equations that hold for forward, backward, and central differences alike.

The reconstructed functions $(\rho^h,\mathbf u^h)$ then satisfy
\begin{equation}\
\begin{aligned}
0=&
\partial_t\rho^h
+ D_{h,x}^{c}(\rho^h u^h)
+ D_{h,y}^{c}(\rho^h v^h)
- \lambda h D_h^2\rho^h,
\\0=&
\partial_t(\rho^h u^h)
+ D_{h,x}^{c}(\rho^h (u^h)^2)
+ D_{h,y}^{c}(\rho^h u^h v^h)
+ D_{h,x}^{c} p(\rho^h)
\\
&\,
- \lambda h D_h^2(\rho^h u^h)
- \mu D_h^2 u^h \notag
\\
&\, - \kappa\Bigg[ \Dhmx\frac{\rho^h D_h^2\rho^h(\cdot+h_x\be_x)+\rho^h(\cdot+h_x\be_x) D_h^2\rho^h}{2} - \demi \Dhmx(\Dhpx\rho^h)^2 \notag
\\
&\qquad
+\demi \Dhmx\big(  \Dhmy\rho^h\Dhmy \rho^h(\cdot+h_x\be_x)\big)   - \Dhmy \big( \Dhcx\rho^h\, \Dhpy \rho^h\big)\Bigg],
\\0=&
\partial_t(\rho^h v^h)
+ D_{h,y}^{c}(\rho^h (v^h)^2)
+ D_{h,x}^{c}(\rho^h u^h v^h)
+ D_{h,y}^{c} p(\rho^h)
\\
&\,
- \lambda h D_h^2(\rho^h v^h)
- \mu D_h^2 v^h \notag
\\
&\, - \kappa\Bigg[ \Dhmy\frac{\rho^h D_h^2\rho^h(\cdot+h_y\be_y)+\rho^h(\cdot+h_y\be_y) D_h^2\rho^h}{2} - \demi \Dhmy(\Dhpy\rho^h)^2 \notag
\\
&\qquad
+\demi \Dhmy\big(  \Dhmx\rho^h\Dhmx \rho^h(\cdot+h_y\be_y)\big)   - \Dhmx \big( \Dhcy\rho^h\, \Dhpx \rho^h\big)\Bigg]
\end{aligned}
\end{equation}
in $(0,T)\times\mathbb{T}^2$,
with initial condition
$
(\rho^h,\mathbf m^h)(0)
=
(\rho_0^h,\mathbf m_0^h).
$
The discrete initial data are defined cellwise by
\begin{align}
	\label{eq:initCon}
	\rho_0^h(x)
	=
	\sum_{i=1}^{M}\sum_{j=1}^{N}
	\mathbf 1_{\Omega_{i,j}}(x)
	\frac{1}{h_x h_y}
	\int_{\Omega_{i,j}}\rho_0(y)\,\mathrm dy,
	\\
	\yu_0^h(x)
	=
	\sum_{i=1}^{M}\sum_{j=1}^{N}
	\mathbf 1_{\Omega_{i,j}}(x)
	\frac{1}{h_x h_y}
	\int_{\Omega_{i,j}}\yu_0(y)\,\mathrm dy.\notag
\end{align}
Finally, the discrete energy \eqref{eq:energy2d} admits the integral representation
\begin{equation}\label{eq:energy.discrete}
\Ensk^h(\rho^h, \yu^h)
=
\int_\TT
\left(
\frac12 \rho^h |\mathbf u^h|^2
+ P(\rho^h)
+ \frac{\kappa}{2}
|D_h^+\rho^h|^2
\right)
\di x.
\end{equation}
As demonstrated in \cite{GÖS}, the scheme \eqref{method:semi2d} has the following properties: 
\begin{itemize}
\item It conserves total mass and momentum.
\item It dissipates energy; in particular, the inequality
\begin{equation}\label{eq:energydissipation}
	\frac{\di}{\di t}\Ensk[\rho^h, \yu^h] \leq  - \mu \int_\TT \left|D^+_h \yu\right|^2 \di x - \kappa \int_\TT \lambda h (D_h^2\rho^h)^2 \di x \leq 0
\end{equation}holds.

\end{itemize}

\section{Dissipative Solutions to the NSK system}

There exist several 
solution concepts for the Euler and Navier–Stokes equations, such as strong solutions, weak entropy solutions, and others; see \cite{feireisl2021numerics} and the references therein. The concept of  (dissipative) measure-valued solutions dates back to the 1980s \cite{DiPerna}  and has later been further extended, cf. \cite{MeasCompNavSto}.
Recently, such solutions have also been defined for Korteweg-type systems \cite{yang2026dissipative, zbMATH07798933}, and their existence has been established.
In this work, we focus on dissipative weak (DW) solutions, a refined concept of measure-valued solutions. These can be seen as the barycenters of dissipative  measure-valued solutions \cite{feireisl2021numerics}
and satisfy the weak formulation up to a defect measure. We define DW solutions for the NSK system as follows:
\begin{definition}[DW solution for the NSK equations]\label{def_dmv}
We say that $(\rho,\bu)$ is a 
\emph{dissipative weak solution} to the NSK system \eqref{eq_Korteweg} with periodic boundary conditions if the following conditions hold:\\
\begin{itemize}

\item 
$
\rho \in L^{\infty}(0,T;H^1(\mathbb{T}^d)) 
\cap C_{\mathrm{weak}}([0,T];L^{\gamma}(\mathbb{T}^d)),
$
\item 
$
\bu \in \begin{cases} 
	L^2(0,T; L^2(\mathbb{T}^d)) &\txt{for $\mu = 0$}\\
	L^2(0,T; H^1(\mathbb{T}^d))  &\txt{for $\mu > 0,$}
\end{cases} 
$
\item
$
\mathbf{m}=\rho \bu \in 
C_{\mathrm{weak}}\!\left([0,T];
L^{\frac{2\gamma}{\gamma+1}}(\mathbb{T}^d)\right),
$

    \item The integral identity corresponding to the continuity equation
    \begin{equation}\label{eq:dw_continuity}
        \left[
        \int_{\mathbb{T}^d} \varphi\, \rho \,\mathrm{d}x
        \right]_{t=0}^{t=\tau}
        =
        \int_0^{\tau}\!\!\int_{\mathbb{T}^d}
        \left(
        \partial_t \varphi\, \rho
        +
        \rho \bu \cdot \nabla \varphi
        \right)
        \,\mathrm{d}x\,\mathrm{d}t
    \end{equation}
    holds for any $0 \leq \tau \leq T$ and any test function 
    $\varphi \in W^{1,\infty}((0,T)\times \mathbb{T}^d)$.
\end{itemize}
\begin{itemize}
    \item The integral identity corresponding to the momentum equation
    \begin{equation}\label{eq:dw_momentum}
        \begin{aligned}
\Bigg[
\int_{\mathbb{T}^d} 
\mathbf{m} \cdot \boldsymbol{\varphi} 
\, \mathrm{d}x
\Bigg]_{t=0}^{t=\tau}
&=
\int_{0}^{\tau} \!\! \int_{\mathbb{T}^d}
\Big[
\mathbf{m} \cdot \partial_t \boldsymbol{\varphi}
+ \rho \mathbf{u} \otimes \mathbf{u} : \nabla \boldsymbol{\varphi}
+ p(\rho)\, \operatorname{div} \boldsymbol{\varphi}
\Big]
\, \mathrm{d}x \, \mathrm{d}t
\\
&\quad
- \mu 
\int_{0}^{\tau} \!\! \int_{\mathbb{T}^d}
\nabla \mathbf{u} : \nabla \boldsymbol{\varphi}
\, \mathrm{d}x \, \mathrm{d}t
\\
&\quad
+ \kappa 
\int_{0}^{\tau} \!\! \int_{\mathbb{T}^d}
\left(
\nabla \rho \otimes \nabla \rho
+\frac{1}{2} |\nabla \rho|^2 I
\right)
: \nabla \boldsymbol{\varphi}
\, \mathrm{d}x \, \mathrm{d}t\\
& \quad + \kappa
\int_{0}^{\tau} \!\! \int_{\mathbb{T}^d}
\rho \nabla \rho \cdot \nabla \operatorname{div}  \boldsymbol{\varphi} 
\, \mathrm{d}x \, \mathrm{d}t
\\
&\quad
+ \int_{0}^{\tau} \!\! \int_{\mathbb{T}^d}
\nabla \boldsymbol{\varphi}
: \mathrm{d}\mathfrak{R}
        \end{aligned}
    \end{equation}
    holds for any $0 \leq \tau \leq T$ and any test function
    $\boldsymbol{\varphi} \in C^\infty([0,T]\times \mathbb{T}^d;\mathbb{R}^d)$,
    with some defect measure
    \[
    \mathfrak{R} \in 
    L^{\infty}(0,T;
    \mathcal{M}^+(\mathbb{T}^d;\mathbb{R}^{d\times d}_{\text{sym}}))
    \]
    where $\mathcal{M}^+(\mathbb{T}^d;\mathbb{R}^{d\times d}_{\text{sym}})$ denotes  the set of positive
semi-definite matrix-valued measures.
    \item (Energy inequality) There exists a defect measure
    \[
    \mathfrak{E} \in 
    L^{\infty}(0,T;
    \mathcal{M}^+(\mathbb{T}^d))
    \]
    such that the energy inequality
    \begin{align}\label{eq:dw_energy}
        \int_{\mathbb{T}^d}
       \ENSK(\rho,\bu)(\tau)
        \di x
        + \mu
        \int_{0}^{\tau}\!\!\int_{\mathbb{T}^d}
        |\nabla \bu|^2
        &\di x \di t
        + \int_{\mathbb{T}^d}
        \mathrm{d}\mathfrak{E}(\tau)
        \\
        &\leq
        \int_{\mathbb{T}^d}
        \ENSK(\rho_0,\bu_0)
        \di x \notag
    \end{align}
    holds for almost all $0 \leq \tau \leq T$.
        \item (Defect compatibility condition) There exist constants $0 < c \leq C$ such that
    \[
    c\,\mathfrak{E}
    \leq
    \operatorname{tr}[\mathfrak{R}]
    \leq
    C\,\mathfrak{E}.
    \]
    holds.
\end{itemize}
\end{definition}

\begin{remark}
Here, we have defined DW solutions with respect to the quantities  $(\rho,\bu)$  similar to Navier-Stokes. For the Euler-Korteweg case, we could replace the velocity by the momentum  in the definition which would simplify the convergence analysis of the numerical scheme similar to Euler and Navier-Stokes cases from  \cite{feireisl2021numerics}.
\end{remark}
Next, we will investigate if a weak-strong uniqueness result holds. Therefore, we introduce the relative energy.
%

The relative energy between two pairs of functions $(\rho,\bu)$ and $(\tilde\rho,\tilde \bu)$ is defined by
\begin{equation}
\begin{aligned}
\mathbb{E}(\rho,\bu \mid \tilde\rho,\tilde \bu)
&=
P(\rho)
+
\frac12 \rho |\bu|^2
+
\frac{\kappa}{2} |\nabla \rho|^2
-
\left(
P(\tilde\rho)
+
\frac12 \tilde\rho |\tilde \bu|^2
+
\frac{\kappa}{2} |\nabla \tilde\rho|^2
\right)
\nonumber
\\
&\quad
-
\left(
P'(\tilde\rho)
-
\frac12 |\tilde u|^2
-
\kappa \Delta \tilde\rho
\right)
(\rho-\tilde\rho)
-
\tilde \bu \cdot (\rho \bu - \tilde\rho \tilde \bu).
\end{aligned}
\end{equation}

Let $(\rho,\bu)$ be a dissipative weak solution of \eqref{eq_Korteweg} and let $(\tilde\rho,\tilde \bu)$ be a smooth solution of \eqref{eq_Korteweg}, i.e. we assume for its regularity:
\[
\tilde{\bu} \in W^{2,\infty}((0,T)\times\mathbb T^d),
\qquad
\tilde\rho \in W^{3,\infty}((0,T)\times\mathbb T^d).
\]
Then, we obtain by regrouping terms and using the smooth solution or functions of it as test functions for \eqref{eq:dw_continuity} and \eqref{eq:dw_momentum}, for a.a. $\tau \in (0,T)$:
\begin{equation}\label{REI-NSK-2D}
\begin{aligned}
&\frac{\di}{\di t}
\int_{\mathbb T^d}
\mathbb{E}(\rho,\rho \bu \mid \tilde\rho,\tilde\rho \tilde{\bu})
\, \mathrm{d}x
+
\mu
\int_{\mathbb T^d}
|\nabla (\bu-\tilde{\bu})|^2
\, \mathrm{d}x
\\
&\le
\int_{\mathbb T^d}
\mathcal T(\rho,\bu \mid \tilde\rho,\tilde{\bu})
\, \mathrm{d}x
\\
&\quad
-\int_{\mathbb T^d} \kappa \Bigg[ \nabla \operatorname{div}\yuti \cdot (\rho - \rhoti)\left( \nabla \rho - \nabla \rhoti\right)
+
\demi \operatorname{div} \yuti \left| \nabla \rho - \nabla \rhoti\right|^2
\\
&
\qquad\qquad +
\nabla \tilde{\bu} :
\Big(
(\nabla\rho-\nabla\tilde\rho)
\otimes
(\nabla\rho-\nabla\tilde\rho)
\Big)\Bigg] 
\, \mathrm{d} x
\\
&\quad
+
C (\|\tilde{\bu}\|_{W^{2,\infty}(\mathbb T^d)})
\int_{\mathbb T^d}
\mathrm{d}\mathfrak E(t).
\end{aligned}
\end{equation}

\noindent Here $\mathcal T(\rho,\bu \mid \tilde\rho,\tilde{\bu})$ denotes the standard transport and pressure remainder terms (as in the viscous compressible Navier--Stokes/Euler system without capillarity). We refer to Chapter 6.3 in \cite{feireisl2021numerics} for details on
how to handle these terms and to \cite{zbMATH06685093} for details on the derivation of the capillarity terms.

%
%
\noindent Using the regularity of the strong solution, all remainder terms in  \eqref{REI-NSK-2D} can be estimated by the relative energy.
We obtain the relative energy inequality 
\begin{align}
\frac{\di}{\di t}
\mathcal E_{\mathrm{rel}}(t)
+
\mu
\|\nabla(\bu-\tilde{\bu})\|_{L^2(\mathbb T^d)}^2
&\le
C\!\left(
\|\tilde{\bu}\|_{W^{2,\infty}}
\right) \left( 
\mathcal E_{\mathrm{rel}}(t)
\nonumber
+
\int_{\mathbb T^d} \mathrm{d}\mathfrak E(t)\right) ,
\end{align}
where
\[
\mathcal E_{\mathrm{rel}}(t)
=
\int_{\mathbb T^d}
\mathbb{E} (\rho,\rho \bu \mid \tilde\rho,\tilde\rho\tilde{\bu})
\, \mathrm{d}x .
\]

Integrating the differential inequality in time gives
\begin{multline}\label{WSU-NSK-int}
\mathcal E_{\mathrm{rel}}(\tau)
+
\mu \int_0^\tau \|\nabla(\bu-\tilde{\bu})\|_{L^2(\mathbb T^d)}^2 \, \mathrm{d}t
+
\int_{\mathbb T^d} \mathrm{d}\mathfrak E(\tau)\\
\le
\mathcal E_{\mathrm{rel}}(0)
+
\int_{\mathbb T^d} \mathrm{d}\mathfrak E(0)
+
C \int_0^\tau \Big( \mathcal E_{\mathrm{rel}}(t) + \int_{\mathbb T^d} \mathrm{d}\mathfrak E(t) \Big) \di t
\end{multline}
and applying Gr\"onwall's lemma to \eqref{WSU-NSK-int} yields
\begin{equation}
\begin{aligned}
\mathcal E_{\mathrm{rel}}(t)
+
\mu \int_0^t &\|\nabla(\bu-\tilde{\bu})\|_{L^2(\mathbb T^d)}^2 \, \mathrm{d}t
+
\int_{\mathbb T^d} \mathrm{d}\mathfrak E(t)
\\
\le&
\left(
\mathcal E_{\mathrm{rel}}(0)
+
\int_{\mathbb T^d} \mathrm{d}\mathfrak E(0)
\right)
\exp(Ct),
\end{aligned}
\end{equation}
for all $t\in[0,T]$. \\
In particular, if the initial data coincide, then
$
\mathcal E_{\mathrm{rel}}(0)=0, \;
\int_{\mathbb T^d} d\mathfrak E(0)=0,
$
and consequently
$
\mathcal E_{\mathrm{rel}}(t)=0, 
\int_{\mathbb T^d} d\mathfrak E(t)=0
\quad \text{for all } t\in[0,T].
$\\

\noindent Since the relative energy functional satisfies\\
$
\mathcal E_{\mathrm{rel}}(t)=0
 \Longleftrightarrow 
\rho=\tilde\rho
\text{ and }
\bu=\tilde{\bu}
\text{ a.e. in } \mathbb T^d,
$
we conclude
\[
(\rho,\bu)=(\tilde\rho,\tilde{\bu})
\quad \text{a.e. in } (0,T)\times\mathbb T^d.
\]

Hence, the strong solution is unique within the class of DW solutions  emanating from the same initial data. In other words, the NSK system with constant viscosity  fulfills the weak--strong uniqueness property.

We can summarize our result in  the following theorem: 
\begin{theorem}[Dissipative-Weak-Strong Uniqueness]
Let $T>0$. Let $(\ro, \yu)$ be a dissipative weak (DW) solution of the NSK system \eqref{eq_Korteweg} with initial data $(\ro_0, \yu_0)$ and periodic boundary conditions on $(0,T) \times \mathbb{T}^d$. Let $(\roti, \yuti)$ be a strong solution on $(0,T) \times \mathbb{T}^d$ to the same system with the same initial data. Then
\[
(\ro, \yu) = (\roti, \yuti) \quad \text{almost everywhere in } (0,T)\times \mathbb{T}^d.
\]
\end{theorem}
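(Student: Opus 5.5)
The proof follows the relative energy method sketched just before the theorem, made rigorous for the DW framework of Definition~\ref{def_dmv}.

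\emph{Step 1: the relative energy inequality.} Test \eqref{eq:dw_continuity} with $\varphi = P'(\roti) - \frac12|\yuti|^2 - \kappa\Delta\roti$ and with $\varphi=\frac12|\yuti|^2$. Test \eqref{eq:dw_momentum} with $\boldsymbol{\varphi}=\yuti$, approximating by mollification since $\yuti$ is only $W^{2,\infty}$; the term $\rho\nabla\rho\cdot\nabla\operatorname{div}\yuti$ requires second derivatives of $\yuti$, which are available. Subtract these identities from the energy inequality \eqref{eq:dw_energy}, and use that $(\roti,\yuti)$ satisfies \eqref{eq_Korteweg} pointwise, so that the strong-solution terms cancel. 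The result is an integrated form of \eqref{REI-NSK-2D}, in which the defect contributes $\int_0^\tau\!\int \nabla\yuti : \mathrm{d}\mathfrak{R}$. The compatibility condition $\operatorname{tr}\mathfrak{R}\le C\mathfrak{E}$ together with positive semidefiniteness gives $|\nabla\yuti:\mathfrak{R}|\le \|\nabla\yuti\|_\infty \operatorname{tr}\mathfrak{R}\le C\|\nabla\yuti\|_\infty\,\mathfrak{E}$. This bounds the defect term by $C\int_0^\tau\int \mathrm{d}\mathfrak{E}$. The quantity $\int\mathrm{d}\mathfrak{E}(\tau)$ stays on the left-hand side, as in \eqref{WSU-NSK-int}.

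\emph{Step 2: remainder estimates.} Control every remainder by $\mathcal{E}_{\rm rel}$.
\begin{itemize}
\item For the hyperbolic and viscous part $\mathcal{T}$, follow Chapter~6.3 of \cite{feireisl2021numerics}. Convexity of $P$ for $\gamma>1$ gives $P(\rho)-P(\roti)-P'(\roti)(\rho-\roti)$ comparable to $|\rho-\roti|^2$ near the range of $\roti$, and to $1+\rho^\gamma$ away from it, with $\roti$ bounded above and below. The transport terms $\rho(\bu-\yuti)\cdot\nabla\yuti\,(\bu-\yuti)$ and the pressure remainder are then bounded by $C\mathcal{E}_{\rm rel}$.
\item The capillarity remainders are $\frac12\operatorname{div}\yuti|\nabla(\rho-\roti)|^2$ and $\nabla\yuti:(\nabla(\rho-\roti)\otimes\nabla(\rho-\roti))$. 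Both are bounded by $C\|\nabla\yuti\|_\infty\kappa|\nabla(\rho-\roti)|^2$, which is part of $\mathbb{E}$.
\end{itemize}

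\emph{Step 3: the main obstacle.} The difficult term is $\nabla\operatorname{div}\yuti\cdot(\rho-\roti)\nabla(\rho-\roti)$. Estimate it by Young's inequality as $C\|\yuti\|_{W^{2,\infty}}\big(|\rho-\roti|^2+|\nabla(\rho-\roti)|^2\big)$. The gradient part is controlled by $\mathbb{E}$, but $|\rho-\roti|^2$ is controlled by the pressure part only where $\rho$ is bounded.
\begin{itemize}
\item On $\{\rho\le 2\sup\roti\}$ this is immediate.
\item On the complementary set, $|\rho-\roti|^2$ is not dominated by $\rho^\gamma$ when $\gamma<2$. There I would use the bound $\|\rho-\roti\|_{L^2}\lesssim\|\rho-\roti\|_{H^1}$, whose gradient part already lies in $\mathcal{E}_{\rm rel}$. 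The $L^2$ part can then be handled in $d\le 2$ (or $d=3$) by Poincaré's inequality, applied to $\rho-\roti$ with zero mean from mass conservation, since testing \eqref{eq:dw_continuity} with $\varphi\equiv1$ gives equal total masses.
\end{itemize}
This last point yields $\|\rho-\roti\|_{L^2}^2\le C\|\nabla(\rho-\roti)\|_{L^2}^2\le C\mathcal{E}_{\rm rel}$, and it is the step I regard as essential.

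\emph{Step 4: conclusion.} Apply Gronwall's lemma to the resulting integral inequality with $y(t)=\mathcal{E}_{\rm rel}(t)+\int\mathrm{d}\mathfrak{E}(t)$. For equal initial data, $\mathcal{E}_{\rm rel}(0)=0$. Moreover, $\mathfrak{E}(0)=0$ is implied by the energy inequality at $\tau\to0$ together with weak continuity. Hence $y\equiv0$. Strict convexity of $P$ and positivity of the kinetic part, where $\rho>0$, give $\rho=\roti$ and $\rho|\bu-\yuti|^2=0$. Since $\roti$ is bounded below, the density $\rho=\roti$ is positive, and therefore $\bu=\yuti$ a.e. For $\mu>0$, the dissipation term $\mu\int_0^t\|\nabla(\bu-\yuti)\|^2=0$ gives this directly as well.
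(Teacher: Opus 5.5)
Your proposal is correct and follows the same route as the paper: derive the relative energy inequality \eqref{REI-NSK-2D}, bound the defect contribution by $C\int\mathrm{d}\mathfrak{E}$, control all remainders by $\mathcal{E}_{\mathrm{rel}}$, and conclude with Gr\"onwall. The paper only asserts that every remainder is controlled by $\mathcal{E}_{\mathrm{rel}}$. Your bound $|\nabla\yuti:\mathfrak{R}|\le\|\nabla\yuti\|_\infty\operatorname{tr}\mathfrak{R}$ and your treatment of $\nabla\operatorname{div}\yuti\cdot(\rho-\roti)\nabla(\rho-\roti)$ via Poincar\'e--Wirtinger for the mean-zero difference $\rho-\roti$ fill in steps the paper leaves implicit. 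That Poincar\'e--Wirtinger step works in every dimension, so your splitting into $\{\rho\le 2\sup\roti\}$ and its complement, as well as the caveat on $d$, are unnecessary.
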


\section{Convergence of a structure-preserving FV scheme via DW solutions}

In this section, 
we demonstrate that the proposed FV scheme  \eqref{semi2drho}--\eqref{semi2dv} converges in the framework of DW solutions for the NSK equations  \eqref{eq_Korteweg}. To the best of our knowledge this is the first convergence proof in that framework and on this level of generality for a practical numerical method for the NSK model, i.e.,  a scheme which is not constructed solely for analytical reasons.
Similar to the Euler or Navier-Stokes case, described in detail in   \cite{feireisl2021numerics}, we need some assumptions to pass to the weak limit.
Let $(\rho^h, \mathbf u^h)$ be the approximate solutions generated by the semi-discrete FV scheme \eqref{method:semi2d} on Cartesian grids with mesh size $h_n \to 0$. Then, we assume the following:\\
\begin{assumption}~\\[-0.3cm]\label{assump}
\begin{itemize}
%
%
	\item \textbf{Uniform existence:}
	There exists a time interval $[0,T]$ with $T>0$ independent of $h$ such that \eqref{semi2drho}--\eqref{semi2dv} admits a solution on $[0,T]$.
	\item \textbf{Positivity of density:} 
	There exists a positive constant $\underline{\rho}$ such that
	$0 < \underline{\rho} \le \rho^h$ for all $t \in [0,T]$ and  for $h \to 0$ uniformly. 
\end{itemize}
\end{assumption}
%

The following estimates then follow directly.

\begin{lemma}[Uniform bounds from discrete energy]\label{lem:energy-bounds}
Under assumption \ref{assump} the following  sequences are bounded uniformly-in-$h$:
\begin{align*}
\mathbf u^h &\in L^\infty(0,T;L^2(\TT)),\\
\rho^h &\in L^\infty(0,T;L^\gamma(\TT)),\\
\nabla^c \rho^h &\in L^\infty(0,T;L^2(\TT)).
\end{align*}
\end{lemma}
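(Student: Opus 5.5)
The plan is to integrate the discrete energy inequality \eqref{eq:energydissipation} in time and read off each bound from the terms of $\ENSK$. Since the right-hand side of \eqref{eq:energydissipation} is nonpositive, for every $t\in[0,T]$ (which exists by the uniform existence part of Assumption~\ref{assump})
\[
\Ensk^h(\rho^h,\yu^h)(t) \le \Ensk^h(\rho^h_0,\yu^h_0).
\]

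The first step is to bound the right-hand side uniformly in $h$. For the kinetic and potential parts, Jensen's inequality on each cell applies to the cell averages \eqref{eq:initCon}: $P$ is convex, and $(\rho,\bu)\mapsto\frac12\rho|\bu|^2$ is bounded on cells as long as $\rho_0$ is bounded above. If only finite energy is assumed, I would instead average the momentum $\rho_0\bu_0$, for which $(\rho,\m)\mapsto|\m|^2/(2\rho)$ is jointly convex. Here I will tacitly read \eqref{eq:initCon} in the form that makes Jensen applicable.

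For the capillary part, $D_{h,x}^+\rho_0^h$ is the difference of neighbouring cell averages. It equals an average of $\partial_x\rho_0$ over a shifted cell of double width, so by Jensen
\[
\|D^+_h\rho^h_0\|_{L^2}\le \|\nabla\rho_0\|_{L^2},
\]
which is finite because $\Ensk(\rho_0,\bu_0)<\infty$. Altogether, $\sup_t \Ensk^h(\rho^h,\yu^h)(t)\le C$ independently of $h$.

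The second step extracts the three bounds.
\begin{itemize}
\item[(i)] Since $P(\rho)=\frac{a}{\gamma-1}\rho^\gamma$ is nonnegative, $\|\rho^h(t)\|_{L^\gamma}^\gamma\le C$.
\item[(ii)] Using the positivity assumption $\rho^h\ge\underline\rho$, we get $\frac{\underline\rho}{2}\|\bu^h\|_{L^2}^2\le \int\frac12\rho^h|\bu^h|^2\le C$. This is the only place the lower density bound enters.
\item[(iii)] For the gradient, $\nabla^c = \frac12(\nabla^+ + \nabla^-)$, and $\nabla^-\phi_{i,j}=\nabla^+\phi_{i-1,j}$ (componentwise shifted). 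By translation invariance of the $L^2(\TT)$ norm under periodic shifts, $\|D^c_h\rho^h\|_{L^2}\le \|D^+_h\rho^h\|_{L^2}\le \sqrt{2C/\kappa}$.
\end{itemize}

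Taking the supremum over $t$ gives the $L^\infty(0,T;\cdot)$ bounds. The only delicate point is the uniform bound on the initial discrete energy, in particular the capillary term and the kinetic term when $\rho_0$ is merely in $L^\gamma$. I would handle this through the convexity/Jensen argument above, or else by simply assuming slightly smoother initial data, e.g.\ $\rho_0\in H^1\cap L^\infty$.
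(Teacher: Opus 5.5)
Your argument follows the paper's route. The dissipation inequality \eqref{eq:energydissipation} bounds $\Ensk^h(\rho^h,\yu^h)(t)$, and the three bounds are read off from the potential part, the kinetic part (with $\rho^h\ge\underline\rho$) and the capillary part. You are more careful than the paper in two places. First, the paper writes $\Ensk^h(t)\le C$ without checking that the \emph{initial} discrete energy is bounded uniformly in $h$. Second, it writes $\frac{\kappa}{2}\int|\nabla^c\rho^h|^2\le\Ensk^h$ although the discrete energy contains $D^+_h\rho^h$; your identity $\nabla^c=\frac12(\nabla^++\nabla^-)$ together with shift invariance is exactly what justifies that step. Your Jensen bound $\|D^+_h\rho^h_0\|_{L^2}\le\|\nabla\rho_0\|_{L^2}$ is correct. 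Strictly, the difference quotient of neighbouring cell averages is a tent-weighted average of $\partial_x\rho_0$ over the two cells, not a uniform one, but Jensen applies all the same.

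The one soft spot is the kinetic part of the initial energy. There you fall back on $\rho_0\in L^\infty$, or on replacing the velocity averages in \eqref{eq:initCon} by momentum averages, and both change the hypotheses. This matters because $H^1(\TT)\not\subset L^\infty$, so the cell averages of $\rho_0$ need not be uniformly bounded. The detour is not needed. Write $\overline{f}_K$ for the average over a cell $K$. Cauchy--Schwarz gives $|\overline{\bu_0}_K|^2\le\overline{\rho_0|\bu_0|^2}_K\,\overline{\rho_0^{-1}}_K$, and using $\rho_0\ge\underline\rho$,
\[
\overline{\rho_0}_K\,\overline{\rho_0^{-1}}_K
=1+\frac{1}{2|K|^2}\int_K\!\int_K\frac{(\rho_0(x)-\rho_0(y))^2}{\rho_0(x)\rho_0(y)}\,\mathrm{d}x\,\mathrm{d}y
\le 1+\frac{1}{\underline\rho^{2}|K|}\int_K|\rho_0-\overline{\rho_0}_K|^2\,\mathrm{d}x
\le 1+\frac{C}{\underline\rho^{2}}\|\nabla\rho_0\|_{L^2(\TT)}^2 .
\]
The last step is the Poincar\'e inequality on the cell. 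In two dimensions it is scale invariant, with $C$ depending only on the aspect ratio $h_x/h_y$. Summing over cells gives
\[
\int_\TT\rho^h_0|\yu^h_0|^2\,\mathrm{d}x\le\bigl(1+C\underline\rho^{-2}\|\nabla\rho_0\|_{L^2}^2\bigr)\int_\TT\rho_0|\bu_0|^2\,\mathrm{d}x .
\]
So the initial discrete energy is uniformly bounded under the stated assumptions plus a bounded mesh aspect ratio, and your proof closes without modifying the scheme.
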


\begin{proof}
 We obtain directly from the kinetic term 
$
\frac12 \int_{\TT} \rho^h |\mathbf u^h|^2 \mathrm{d}x \le \Ensk^h(t) \le C.
$
Using the lower bound $\rho^h \ge \underline{\rho}$, we get
\[
\int_{\TT} |\mathbf u^h|^2 \mathrm{d}x \le \frac{2}{\underline{\rho}} \Ensk^h(t) \le \frac{2C}{\underline{\rho}}.
\]
Hence $\mathbf u^h \in L^\infty(0,T;L^2(\TT))$ with uniform bound.\\
Due to the convexity of the pressure potential, the following bound on the density holds:
\[
\int_{\TT} P(\rho^h) \mathrm{d}x \le \Ensk^h(t) \le C \quad \implies \quad
\int_{\TT} (\rho^{h})^{\gamma} \mathrm{d}x \le C',
\]
i.e., $\rho^h \in L^\infty(0,T;L^\gamma(\TT))$ is uniformly bounded.
Finally, the capillarity term gives directly
\[
\frac{\kappa}{2} \int_{\TT} |\nabla^c \rho^h|^2 \mathrm{d}x \le \Ensk^h(t) \le C,
\]
hence
\[
\|\nabla^c \rho^h\|_{L\infty(0,T;L^2(\TT))}^2 \le \frac{2C}{\kappa}.
\]
\end{proof}
\begin{remark}
Instead of working with central differences inside the discrete gradient estimate in Lemma \ref{lem:energy-bounds}, similar results hold for forward and backward differences. Additionally, a uniform bound for $\rho^h$ in $L^\infty(0,T;L^2(\TT))$ can be derived using a discrete version of the Poincaré-Friedrichs inequality, and $L^\infty L^p$ bounds for $1 \leq p < \infty$ are available under some additional assumptions, cf. \eqref{eq:conv.rho.s}.
\end{remark}

The basic idea of the convergence proof is that we demonstrate an analogous result to Lax-Richtmyer equivalence, which, roughly speaking, states that consistency and stability imply convergence. Therefore, we will investigate the consistency error of the scheme, similar to analyses of the Euler equations in \cite{abgrall2022convergence, lukacova2022convergence,zbMATH08060826} and Navier-Stokes equations in \cite{zbMATH07528308,zbMATH07559967,zbMATH08106216}.
Here, consistency means that when inserting the numerical approximations into the weak formulation, the consistency error  tends  to zero under grid refinement. We prove the following result:

\begin{theorem}[Consistency of the FV scheme to the NSK system]
Let $(\rho^h,\bu^h)$ be the piecewise constant reconstruction of a solution of the semi-discrete FV-scheme 
\eqref{method:semi2d} with the initial condition $(\rho_0,\bu_0)	\colon \TT \to \real^+ \times \real^2$ measurable with $\mathcal E_{\text{NSK}}(\rho_0,\bu_0) < \infty$, and let Assumption \ref{assump} hold.


Then the approximate
solutions satisfy the weak formulation of the NSK system \eqref{eq_Korteweg} up to
consistency error terms. Indeed,
\begin{equation}\label{NSK-consistency-mass}
\begin{aligned}
-\int_{\TT} \rho^h(0,x)\,\varphi(0,x)\di x
&=
\int_0^T \int_{\TT}
\left(
\rho^h \partial_t \varphi
+
\rho^h \bu^h \cdot \nabla \varphi
\right)
\di x \di t
+
\int_0^T R_1(t,\varphi)\di t,
\end{aligned}
\end{equation}
%
holds for any $\varphi \in C_c^\infty((0,T)\times\TT)$, and
\begin{equation}\label{NSK-consistency-momentum}
\begin{aligned}
&-\int_{\TT} (\rho^h\bu_h)(0,x)\cdot \boldsymbol{\varphi}(0,x)\di x\\
=&
\int_0^T \int_{\TT}
\Big(
\rho^h \bu_h \cdot \partial_t \boldsymbol{\varphi}
+
(\rho^h \bu_h \otimes \bu_h + p(\rho^h)\, \mathbf I)
: \nabla \boldsymbol{\varphi}
\Big)
\di x \di t
\\
&
- \mu \int_0^T \int_{\TT}
D^c_h \bu_h : \nabla \boldsymbol{\varphi}\di x \di t
\\
&
+ \kappa \int_0^T \int_{\TT}
\left(
D^c_h\rho^h \otimes D^c_h\rho^h
+ \frac12 |D^c_h\rho^h|^2 \, \mathbf I
\right)
: \nabla\boldsymbol{\varphi}
\di x \di t
\\
&
+  \kappa \int_0^T \int_{\TT}
\rho^h D^c_h\rho^h 
\cdot \nabla \operatorname{div} \boldsymbol{\varphi}
\di x \di t
\\
&
+
\int_0^T R_2(t,\boldsymbol{\varphi})\di t ,
\end{aligned}
\end{equation}
holds for any $\boldsymbol{\varphi}\in C_c^\infty((0,T)\times\TT;\mathbb R^2)$.\\

\noindent Moreover, the consistency error terms satisfy
\[
\|R_1(t,\varphi)\|_{L^1(0,T)}
\to 0, \quad
\|R_2(t,\boldsymbol{\varphi})\|_{L^1(0,T)}
\to 0 
\txt{as $h \to 0$.}
\]

\end{theorem}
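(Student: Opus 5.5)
The plan is the standard one for structure-preserving finite volume schemes: multiply the reconstructed equations by the test function, integrate over $(0,T)\times\TT$, move every difference operator onto the test function by discrete summation by parts, and then replace discrete derivatives of $\varphi$ by exact ones. Every leftover term is collected in $R_1$ or $R_2$ and shown to vanish using the uniform bounds of Lemma~\ref{lem:energy-bounds}, the dissipation bounds from \eqref{eq:energydissipation}, and $0<\underline\rho\le\rho^h$ from Assumption~\ref{assump}.

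\textbf{Mass equation.} Testing with $\varphi$ and integrating by parts in time produces the initial term. The shift identity $\int_\TT f\,D^c_{h,x}g\,\di x=-\int_\TT g\,D^c_{h,x}f\,\di x$ gives
\[
\int_\TT \rho^h u^h\,D^c_{h,x}\varphi\,\di x,
\qquad
|D^c_h\varphi-\nabla\varphi|\le Ch^2\|\varphi\|_{C^3}.
\]
The momentum satisfies $\rho^h\bu^h\in L^\infty(0,T;L^1)$, since $\|\rho^h\bu^h\|_{L^1}\le \|\rho^h\|_{L^1}^{1/2}\|\rho^h|\bu^h|^2\|_{L^1}^{1/2}$. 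Hence this replacement error is $O(h^2)$.

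For the artificial viscosity term I will move both differences onto $\varphi$:
\[
\Bigl|h\int_\TT\lambda\,\rho^h\,D_h^2\varphi\,\di x\Bigr|\le h\,\lambda\,\|\rho^h\|_{L^1}\|\varphi\|_{C^2}.
\]
The quantity $\lambda$ must therefore satisfy $h\lambda\to0$. Since $\lambda$ depends on $\max|\bu_{i,j}|$, I would first use the inverse estimate $\max|\bu_{i,j}|\le h^{-1}\|\bu^h\|_{L^2}$. This only gives $h\lambda=O(1)$, so I will instead keep one difference on $\rho^h$ and use the dissipation term $\kappa\int\lambda h(D_h^2\rho^h)^2$. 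Alternatively, I will use $\|D^+_h\rho^h\|_{L^2}\le C$, which gives
\[
\Bigl|h\lambda\int_\TT D^+_h\rho^h\cdot D^+_h\varphi\,\di x\Bigr|\le C\,h\lambda .
\]
If needed, I will combine this with $\int_0^T h\lambda\|D_h^2\rho^h\|^2_{L^2}\di t\le C$ to get a bound like $(h\lambda)^{1/2}h^{1/2}$ in $L^1(0,T)$. This term is where I expect a genuine difficulty. If neither route closes, I will record an implicit condition, for instance $\lambda$ bounded, which follows if the velocities stay bounded and the pressure is locally Lipschitz under Assumption~\ref{assump}.

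\textbf{Momentum equation: convective, pressure and viscous terms.}
\begin{itemize}
\item The convective and pressure terms are handled exactly as in the mass equation. Here $\rho^h\bu^h\otimes\bu^h$ and $p(\rho^h)$ are bounded in $L^\infty(0,T;L^1)$ by the energy.
\item The viscous term gives, after one summation by parts, $\mu\int D^+_h\bu^h:D^+_h\boldsymbol\varphi$.
\item I rewrite that viscous term as $\mu\int D^c_h\bu^h:\nabla\boldsymbol\varphi$ plus an error. This uses the identity that averaging forward and backward differences gives the central one, after shifting the test function. The error is $O(h)\,\|D^+_h\bu^h\|_{L^2(0,T;L^2)}$, which is bounded by \eqref{eq:energydissipation}.
\end{itemize}

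\textbf{Momentum equation: capillarity terms (main obstacle).} After summation by parts the bracket becomes a sum of quadratic expressions in shifted differences of $\rho^h$, tested against $D^+_h\boldsymbol\varphi$. The term $\tfrac12(\rho_{i}\Ld\rho_{i+1}+\rho_{i+1}\Ld\rho_{i})$ needs a second summation by parts to move one Laplacian onto $\rho^h\,\partial_x\varphi$. This produces $\rho^h\,D_h\rho^h\cdot D_h\operatorname{div}\boldsymbol\varphi$ together with $|D_h\rho^h|^2\,\partial_x\varphi$-type terms.

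The hard part is matching all shift combinations to the continuous stress $\nabla\rho\otimes\nabla\rho+\frac12|\nabla\rho|^2 I$ written with $D^c_h$. Products like $\Dhmy\rho^h\,\Dhmy\rho^h(\cdot+h_x\be_x)$ must be replaced by $(D^c_{h,y}\rho^h)^2$. For this I will not try to show that the difference of such products is small pointwise, since I only control $D_h\rho^h$ in $L^2$ and not second differences. Instead I will use that translation commutes with the test function up to $O(h)\|\nabla\boldsymbol\varphi\|_{C^1}$. Each product is then an $L^1$ function of the same form, so shifting it costs only $O(h)\,\|D^\pm_h\rho^h\|_{L^2}^2$.

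Replacing forward and backward squares by central ones is more delicate, since $D^+D^+$ and $D^cD^c$ differ by second differences. I will handle this with the dissipation bound $h\lambda\|D_h^2\rho^h\|^2\in L^1(0,T)$, using $\lambda\ge\delta$. This gives $h^{3/2}\|D^2_h\rho^h\|_{L^2}\to0$ in $L^2(0,T)$, which controls terms of the form $h\,\|D_h\rho^h\|\,\|D_h^2\rho^h\|$. Collecting all these errors into $R_2$ yields \eqref{NSK-consistency-momentum}.
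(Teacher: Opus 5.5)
Your treatment of the capillarity terms follows the paper's route. You use one summation by parts plus the discrete product rule to produce $\rho^h D_h\rho^h\cdot D_h\operatorname{div}\boldsymbol\varphi$. You then write every one-sided difference as $D^\pm_{h,x}\rho^h=\Dhcx\rho^h\pm\frac{h_x}{2}\Dhmx\Dhpx\rho^h$ and kill the remainders with the numerical-diffusion dissipation. Note that what $\int_0^T\lambda h\|D^2_h\rho^h\|^2_{L^2}\,\mathrm{d}t\le C$ and $\lambda\ge\delta$ give you, and what you need, is $h\|D^2_h\rho^h\|_{L^2(0,T;L^2)}\le Ch^{1/2}$; the $h^{3/2}$ statement does not control $h\|D_h\rho^h\|\,\|D^2_h\rho^h\|$.

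More importantly, your translation argument for $\Dhmy\rho^h\,\Dhmy\rho^h(\cdot+h_x\be_x)$ fails. Only a whole product can be shifted onto $\boldsymbol\varphi$, not one factor. With $g=\Dhmy\rho^h$ one has $g\,g(\cdot+h_x\be_x)=\frac12 g^2+\frac12 g^2(\cdot+h_x\be_x)-\frac{h_x^2}{2}(\Dhpx g)^2$. The first two pieces are handled by your shift, but the last is $\frac{h_x^2}{2}(\Dhpx\Dhmy\rho^h)^2$. That is $O(1)$ if $D_h\rho^h$ oscillates at grid scale, which the energy bound alone does not exclude. You must use the dissipation bound here too, together with the fact (used in the paper) that mixed differences are controlled in $L^2(\TT)$ by $D^2_h\rho^h$. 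This gives an $O(h)$ contribution in $L^1(0,T)$.

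The genuine gap is the numerical-diffusion term. The bound $(h\lambda)^{1/2}h^{1/2}$ is not available. Summation by parts gives $C h\lambda\|\boldsymbol\varphi\|_{C^2}$, and Cauchy--Schwarz against the dissipation gives $C\big(\int_0^T h\lambda\,\mathrm{d}t\big)^{1/2}$. So you need $h\lambda\to0$ in $L^1(0,T)$, and the same holds for $h\lambda\int_{\TT}\rho^h\bu^h\cdot D^2_h\boldsymbol\varphi\,\mathrm{d}x$ in the momentum equation. For $\mu=0$ nothing in Assumption \ref{assump} or \eqref{eq:energydissipation} gives this, since the inverse estimate only yields $h\lambda=O(1)$.

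The paper's proof does not treat these terms either; it defers them to \cite{feireisl2021numerics}. The hypothesis \eqref{eq:lambda.limit} it imposes later in Lemma \ref{lem:conv} is exactly what closes them. That, rather than boundedness of $\lambda$, is the right fallback: boundedness would also need an upper bound on $\rho^h$, because $p'(\rho)=a\gamma\rho^{\gamma-1}$ is unbounded, and local Lipschitz continuity of $p$ does not provide one. For $\mu>0$ you can avoid the extra hypothesis altogether. Apply the two-dimensional discrete Sobolev inequality $\|f^h\|_{L^\infty}\lesssim(1+|\log h|)^{1/2}\big(\|f^h\|_{L^2}+\|D^+_hf^h\|_{L^2}\big)$ to $\bu^h$ (using the viscous dissipation) and to $\rho^h$ (using mass conservation and the gradient bound). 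This yields $\int_0^T h\lambda\,\mathrm{d}t\lesssim h\,(1+|\log h|)^{\max\{1/2,(\gamma-1)/4\}}\to0$.
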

%

\begin{proof}
Let $g_h: [0,T] \times \TT \rightarrow \mathbb{R}$ be a function so that $g_h(t,\cdot)$ is cell-wise constant for all $t\in [0,T]$ and that is continuously differentiable in time.
First, we observe that for all 
\[
\phi \in C^{\infty}([0,T]\times \TT)
\]
the following identity holds:
\begin{equation}\label{time-id}
\left.
\int_{\TT} g_h \phi \di x
\right|_{t=0}^{t=\tau}
=
\int_0^\tau
\int_{\TT}
\frac{\di}{\di t}(g_h \phi)\di x \di t
=
\int_0^\tau
\int_{\TT}
\left(
g_h \partial_t \phi
+
\phi\,\partial_t g_h
\right) \di x \di t .
\end{equation}

We denote the
projection of $\phi$  onto the function given by its cell averages by $\phi_h$. Then, we have
\begin{equation}\label{interp-split}
\int_0^\tau
\int_{\TT}
\phi\,\partial_t g_h\di x \di t
=
\int_0^\tau
\int_{\TT}
(\phi-\phi_h)\,\partial_t g_h\\di x \di t
+
\int_0^\tau
\int_{\TT}
\phi_h\,\partial_t g_h\di x \di t .
\end{equation}

The first term on the right-hand side vanishes as, for each $t\in [0,T]$), $(\phi(t,\cdot)-\phi_h(t,\cdot))$ is orthogonal to all cell-wise constant functions such as $\partial_t g_h(t,\cdot)$.
Therefore, it suffices to consider the second term in \eqref{interp-split}. In particular, we focus on the momentum equation \eqref{NSK-consistency-momentum}, as the density equation \eqref{NSK-consistency-mass} can be handled analogously to the arguments in \cite{feireisl2021numerics}.
We therefore multiply the momentum equation from the FV scheme with corresponding cell averages of a test function and sum over all cells, which yields, after discrete integration by parts:
\begin{equation}\label{eq_semi}
\begin{aligned}
&\int_0^\tau \!\!
\int_{\TT}
\left(
\boldsymbol{\varphi_h} \cdot \partial_t\left( \rho^h \yu_h\right)  \
\right) \mathrm{d}x \, \mathrm{d}t \\
=&\int_0^\tau\!\!
\int_{\TT}
(\rho^h \bu^h\otimes\bu^h+p(\rho^h)I):D_h^c \boldsymbol{\varphi}_h \dd x\dd t\\ 
    &
    + \int_0^T\!\!\int_{\TT} -\lambda h  \rho^h \bu^h\cdot D^2_h\varphi 
    -\mu D_h^+ \bu^h:D_h^+ \boldsymbol{\varphi_h} \dd x \dd t
    \\
    &
    - \int_0^T\!\!\int_{\TT} \kappa \Bigg[ 
    \frac{\rho^h D^2_h\rho^h(\cdot+h_x\be_x)+\rho^h(\cdot+h_x\be_x) D^2_h\rho^h}{2}\Dhpx\varphi_x
       \\
    &\qquad
    +\frac{ \rho^h D^2_h\rho^h(\cdot+h_y\be_y)+\rho^h(\cdot+h_y\be_y) D^2_h\rho^h}{2}\Dhpy\varphi_y
     \\
    &\qquad
    - \demi (\Dhpy\rho^h)^2\Dhpy\varphi_y  - \demi (\Dhpx\rho^h)^2\Dhpx\varphi_x
    \\
    &\qquad
    +\demi\Dhmy\rho^h\Dhmy \rho^h(\cdot+h_x\be_x)\Dhpx\varphi_x
    \\
    &\qquad
    +\demi\Dhmx\rho^h\Dhmx \rho^h(\cdot+h_y\be_y)\Dhpy\varphi_y
    \\
    &\qquad
    -\Dhcx\rho^h\, \Dhpy \rho^h\Dhpy\varphi_x
    -\Dhcy\rho^h\, \Dhpx \rho^h\Dhpx\varphi_y
    \Bigg] \dd x \dd t
\end{aligned}
\end{equation}
with $\boldsymbol{\varphi_h}=(\varphi_x, \varphi_y)$. The Navier–Stokes terms can be derived and estimated similarly to the analysis in \cite{feireisl2021numerics}. 

The capillarity part in \eqref{eq_semi} is what differs from \cite{feireisl2021numerics} and needs to be investigated in order
 to derive \eqref{NSK-consistency-momentum}.
We have the original terms: 
\begin{equation}\label{K_Terms_Conv_Orig}
\begin{aligned}
- \kappa &\int_{0}^T  \!\! \int_{\TT}
    \frac{\rho^h D^2_h\rho^h(\cdot+h_x\be_x)+\rho^h(\cdot+h_x\be_x) D^2_h\rho^h}{2}\Dhpx\varphi_x
       \\
    &
    +\frac{ \rho^h D^2_h\rho^h(\cdot+h_y\be_y)+\rho^h(\cdot+h_y\be_y) D^2_h\rho^h}{2}\Dhpy\varphi_y
    \\
    &
    - \demi (\Dhpy\rho^h)^2\Dhpy\varphi_y - \demi (\Dhpx\rho^h)^2\Dhpx\varphi_x
    \\
    &
   { +\demi\Dhmy\rho^h\Dhmy \rho^h(\cdot+h_x\be_x)\Dhpx\varphi_x}
   {+\demi\Dhmx\rho^h\Dhmx \rho^h(\cdot+h_y\be_y)\Dhpy\varphi_y}
    \\
    &
    -\Dhcx\rho^h\, \Dhpy \rho^h\Dhpy\varphi_x
    -\Dhcy\rho^h\, \Dhpx \rho^h\Dhpx\varphi_y
     \di x \di t. 
\end{aligned}
\end{equation}
In the next step, we apply discrete integration by parts in the
$x$-direction to the remaining $D^2_h \rho^h$ terms and combine this with the discrete product rule to obtain
\begin{equation*}
\begin{aligned}
\int_{\TT}& \frac{\rho^h D^2_h\rho^h(\cdot+h_x\be_x)+\rho^h(\cdot+h_x\be_x) D^2_h\rho^h}{2} \Dhpx\varphi_x\dd x
\\
&\quad
=-\frac{1}{2}\int_{\TT}\Dhmx\rho^h\Dhmx\rho^h(\cdot+h_x\be_x)\Dhpx\varphi_x(\cdot-h_x\be_x)
\\
&\qquad\qquad+\rho^h\Dhmx\rho^h(\cdot+h_x\be_x)\Dhmx\Dhpx\varphi_x
\\
&\qquad\qquad
+\Dhmy\rho^h\Dhmy\rho^h(\cdot+h_x\be_x)\Dhpx\varphi_x(\cdot-h_y\be_y)
\\
&\qquad\qquad
+\rho^h\Dhmy\rho^h(\cdot+h_x\be_x)\Dhmy\Dhpx\varphi_x
\\
&\qquad\qquad
+\Dhmx\rho^h(\cdot+h_x\be_x) \Dhmx\rho^h \Dhpx\varphi_x
\\
&\qquad\qquad+\rho^h\Dhmx\rho^h\Dhmx\Dhpx\varphi_x
\\
&\qquad\qquad
+
\Dhmy\rho^h(\cdot+h_x\be_x) \Dhmy\rho^h \Dhpx\varphi_x(\cdot-h_y\be_y)
\\
&\qquad\qquad
+ \rho^h(\cdot+h_x\be_x)\Dhmy\rho^h\Dhmy\Dhpx\varphi_x\dd x
\\
&\qquad
=-\int_{\TT} \Dhpx\rho^h \Dhmx\rho^h \Dhcx\varphi_x
+ \rho^h\Dhcx\rho^h\Dhmx\Dhpx\varphi_x
\\
&\qquad\qquad
+\Dhmy\rho^h \Dhmy\rho^h(\cdot+h_x\be_x)\Dhpx\varphi_x(\cdot-h_y\be_y)\dd x
\\
&\qquad\qquad
+ \frac{\rho^h\Dhmy\rho^h(\cdot+h_x\be_x)+\rho^h(\cdot+h_x\be_x)\Dhmy\rho^h}{2}\Dhmy\Dhpx\varphi_x \dd x,
\end{aligned}
\end{equation*}
and similarly in the $y$-direction:
\begin{equation*}
\begin{aligned}
&\int_{\TT} \frac{\rho^h D^2_h\rho^h(\cdot+h_y\be_y)+\rho^h(\cdot+h_y\be_y) D^2_h\rho^h}{2} \Dhpy\varphi_y\dd x
\\
=&-\int_{\TT} \Dhpy\rho^h \Dhmy\rho^h \Dhcy\varphi_y
+ \rho^h\Dhcy\rho^h\Dhmy\Dhpy\varphi_y
\\ 
&\quad +\Dhmx\rho^h \Dhmx\rho^h(\cdot+h_y\be_y) \Dhpy\varphi_y(\cdot-h_x\be_x)\dd x
\\
&
\quad + \frac{\rho^h\Dhmx\rho^h(\cdot+h_y\be_y)+\rho^h(\cdot+h_y\be_y)\Dhmx\rho^h}{2}\Dhmx\Dhpy\varphi_y \dd x.
\end{aligned}
\end{equation*}
We can now express the terms in  \eqref{K_Terms_Conv_Orig}  as
\begin{equation}\label{K_Terms_two}\notag
	\begin{aligned}
		& \kappa\int_0^T\!\!\int_{\TT} S_1 + S_2 + S_3 \di x \di t
	\end{aligned}
\end{equation}
with \begin{align*}
	S_1 :=
	& \frac{\rho^h\Dhmy\rho^h(\cdot+h_x\be_x)+\rho^h(\cdot+h_x\be_x)\Dhmy\rho^h}{2}\Dhmy\Dhpx\varphi_x
	\\
	&+\frac{\rho^h\Dhmx\rho^h(\cdot+h_y\be_y)+\rho^h(\cdot+h_y\be_y)\Dhmx\rho^h}{2}\Dhmx\Dhpy\varphi_y
	\\
	&+ \rho^h\Dhcx\rho^h\Dhmx\Dhpx\varphi_x+ \rho^h\Dhcy\rho^h\Dhmy\Dhpy\varphi_y,
	\\
	S_2 :=
	& \demi (\Dhpx\rho^h)^2\Dhpx\varphi_x
	+ \demi (\Dhpy\rho^h)^2\Dhpy\varphi_y
	\\
	&-\demi\Dhmy\rho^h\Dhmy \rho^h(\cdot+h_x\be_x)\Dhpx\varphi_x
	{-\demi\Dhmx\rho^h\Dhmx \rho^h(\cdot+h_y\be_y)\Dhpy\varphi_y}
	\\			
	&+\Dhmy\rho^h \Dhmy\rho^h(\cdot+h_x\be_x)\Dhpx\varphi_x(\cdot-h_y\be_y)
	\\
	&+\Dhmx\rho^h \Dhmx\rho^h(\cdot+h_y\be_y) \Dhpy\varphi_y(\cdot-h_x\be_x),
	\\
	S_3:=
	&\Dhpx\rho^h \Dhmx\rho^h \Dhcx\varphi_x
	+\Dhpy\rho^h \Dhmy\rho^h \Dhcy\varphi_y
	\\
	&+\Dhcx\rho^h\, \Dhpy \rho^h\Dhpy\varphi_x
	+\Dhcy\rho^h\, \Dhpx \rho^h\Dhpx\varphi_y.
	&
\end{align*}

In the following, we  investigate the integrals over $\mathrm{S}_i$ separately to demonstrate that terms in \eqref{K_Terms_two} tend to the desired limit terms in \eqref{NSK-consistency-momentum} with additional errors which vanish under grid refinement.
We rewrite the integral over $S_1$ using the identities
\begin{align}	
	D^\pm_{h,x}\rho^h 
	=& \, \Dhcx \rho^h \pm \frac{h_x}{2} \Dhmx \Dhpx \rho^h
	\\[0.5cm]
	\rho^h(\cdot\pm h_x\be_x)
	=& \, \rho^h  +  h_x D^\pm_{h,x} \rho^h
\end{align} and their analogues in $y$-direction to obtain
\begin{align*}
	\int_0^T\!\!\int_{\TT}\mathrm{S}_1\di x \di t = \int_0^T& \!\!\int_{\TT} \Big[\rho^h\Dhcx\rho^h-\rho^h \frac{h_x}{2} \Dhmx \Dhpx \rho^h +\rho^h \frac{h_y}{2}   \Dhpy\Dhmx\rho^h
	\\
	&\quad
	+\frac{h_y}{2}\Dhmx\rho^h\Dhpy\rho^h\Big]\Dhmy\Dhpx\varphi_x
	\\
	&+\Big[\rho^h\Dhcy\rho^h- \rho^h \frac{h_y}{2} \Dhmy \Dhpy \rho^h +\rho^h\frac{h_x}{2}   \Dhpx\Dhmy\rho^h
	\\
	&\quad +\frac{h_x}{2}\Dhmy\rho^h\Dhpx\rho^h\Big]\Dhmx\Dhpy\varphi_y
	\\
	&+ \rho^h\Dhcx\rho^h\Dhmx\Dhpx\varphi_x+ \rho^h\Dhcy\rho^h\Dhmy\Dhpy\varphi_y \di x \di t.
\end{align*}
For $\phi \in C^k, \, k \geq 2$  and its projection to cell averages $\phi_h$, we have:
\begin{align}\label{eq:phi_approx}
	D^\bullet_{h,x} \phi^h - \partial_x \phi = \mathcal{O}(h), 
	\quad D^\bullet_{h,y} \phi^h - \partial_y \phi  = \mathcal{O}(h).
\end{align}
%

We can also deduce the a-prori bounds
\begin{equation}\label{eq:limitL2L2Drho}
\norm{D^{\bullet}_h\rho^h}_{L^2(0,T;L^2(\mathbb{T}))} \leq C, \txt{and} h\norm{D^2_h\rho^h}_{L^2(0,T;L^2(\TT))} \to 0 \txt{as} h\to 0.
\end{equation}
from the energy inequality \eqref{eq:energydissipation} integrated over time.
Note that with \\
$\norm{D^{2}_h\rho^h}_{L^2(0,T;L^2(\mathbb{T}))}$, mixed differences in $x-$ and $y-$direction such as  \\
$\Dhmy \Dhpx \rho^h$ are also bounded, as a simple calculation shows. 
Using \eqref{eq:phi_approx}, we obtain for the integral over $S_1$:
\begin{align*}
\int_0^T\!\!\int_{\TT}\mathrm{S}_1\di x \di t
=&
\int_0^T\!\!\int_{\TT}
 \rho^h D^c_h \rho^h \cdot \nabla \operatorname{div} \boldsymbol{\varphi}  \di x\di t \\
&{+ \int_0^T\!\!\int_{\TT}  \rho^h D^c_h \rho^h \cdot \begin{pmatrix}
	1 \\
	1 \end{pmatrix} \mathcal{O}(h) \di x \di t}
\\
&+\int_0^T\!\!\int_{\TT}\Big[ -\rho^h \frac{h_x}{2} \Dhmx \Dhpx \rho^h +\rho^h\frac{h_y}{2}   \Dhpy\Dhmx\rho^h
\\
&\qquad \quad +\frac{h_y}{2}\Dhmx\rho^h\Dhpy\rho^h\Big]\Dhmy\Dhpx\varphi_x
\\
& \qquad + \Big[-\rho^h \frac{h_y}{2} \Dhmy \Dhpy \rho^h +\rho^h\frac{h_x}{2}   \Dhpx\Dhmy\rho^h
\\
&\qquad\quad +\frac{h_x}{2}\Dhmy\rho^h\Dhpx\rho^h\Big]\Dhmx\Dhpy\varphi_y \di x \di t.
\end{align*}

With the above bounds and Hölder's inequality, we find for the first error integral:
\begin{align*}
&\int_0^T \left| \int_{\TT}  \rho^h D^c_h \rho^h \cdot \begin{pmatrix}1 \\1 \end{pmatrix} \mathcal{O}(h) \di x \right| \di t =\int_0^T{ \|\rho^h\|_{L^2}}\|D_h^c \rho^h\|_{L^2} \mathcal{O}(h) \di t.
\end{align*}For the second error integral, we find
\begin{align*}
	&\int_0^T\Bigg| \int_{\TT}\Big[ -\rho^h \frac{h_x}{2} \Dhmx \Dhpx \rho^h +\rho^h\frac{h_y}{2}  \Dhpy\Dhmx\rho^h\Big] \Dhmy\Dhpx\varphi_x 
	\\
	&\qquad + \Big[-\rho^h \frac{h_y}{2} \Dhmy \Dhpy \rho^h +\rho^h\frac{h_x}{2}    \Dhpx\Dhmy\rho^h\Big]\Dhmx\Dhpy\varphi_y \di x \Bigg| \di t \\
	\lesssim & \, h\int_0^T { \|\rho^h\|_{L^2}}  \left\| D^2_h\rho^h \right\|_{L^2} \left\| \varphi \right\|_{C^2} \di t
\end{align*}and
\begin{align*}
&\int_0^T\Bigg| \int_{\TT} \frac{h_y}{2}\Dhmx\rho^h\Dhpy\rho^h \, \Dhmy\Dhpx\varphi_x + \frac{h_x}{2}\Dhmy\rho^h\Dhpx\rho^h \, \Dhmx\Dhpy\varphi_y \di x \Bigg| \di t
\\
\lesssim &\, h \int_0^T \|D^+_h\rho^h\|_{L^2}\|D^-_h\rho^h\|_{L^2}\left\| \varphi \right\|_{C^2} \di t;
\end{align*}
so that with the space-time bounds in \eqref{eq:limitL2L2Drho} we get 
\begin{align*}
\int_0^T\!\!\int_{\TT}\mathrm{S}_1\di x \di t
=&
\int_0^T\!\!\int_{\TT}
\rho^h D^c_h \rho^h \cdot \nabla \operatorname{div} \boldsymbol{\varphi}  \di x + e_1^h(t, \boldsymbol\varphi) \di t
\end{align*} 
with $\|e_1^h(t,\varphi)\|_{L^1(0,T)}\to 0$ for $t \to 0$, concluding the analysis of $S_1$.\\


\noindent Applying shift operators to rewrite $S_2$ in terms of central differences
and ordering error terms in the same manner as above, the integral over $S_2$ reads
\begin{align*}
&\int_0^T\!\!\int_{\TT}\mathrm{S}_2\di x \di t
\\
=& \int_0^T\!\!\int_{\TT}\demi (\Dhcx\rho^h  + \frac{h_x}{2} \Dhmx \Dhpx \rho^h)^2 \Dhpx\varphi_x
    \\
    &+ \demi (\Dhcy\rho^h  + \frac{h_y}{2} \Dhmy \Dhpy \rho^h)^2 \Dhpy\varphi_y
    \\
    &
    -\demi\left( \Dhcy\rho^h - \frac{h_y}{2} \Dhmy \Dhpy \rho^h\right)  
    \\
    &\quad
    \left( \Dhcy\rho^h - \frac{h_y}{2} \Dhmy \Dhpy \rho^h + h_x \Dhpx \Dhmy\rho^h \right)\Dhpx\varphi_x
     \\
    &
    -\demi\left( \Dhcx\rho^h  - \frac{h_x}{2} \Dhmx \Dhpx \rho^h\right) 
    \\
    &\quad
    \left( \Dhcx\rho^h - \frac{h_x}{2} \Dhmx \Dhpx \rho^h + h_y \Dhpy \Dhmx\rho^h \right)\Dhpy\varphi_y
    \\
    &
    +\left( \Dhcy\rho^h - \frac{h_y}{2} \Dhmy \Dhpy \rho^h\right)  \left( \Dhcy\rho^h - \frac{h_y}{2} \Dhmy \Dhpy \rho^h + h_x \Dhpx \Dhmy\rho^h \right)
    \\
    &\quad\left( \Dhpx\varphi_x - h_y\Dhmy\Dhpx\varphi_x \right) 
    \\
    &
    +\left( \Dhcx\rho^h  - \frac{h_x}{2} \Dhmx \Dhpx \rho^h\right) \left( \Dhcx\rho^h - \frac{h_x}{2} \Dhmx \Dhpx \rho^h + h_y \Dhpy \Dhmx\rho^h \right) 
     \\
    &\quad\left( \Dhpy\varphi_y - h_x\Dhmx\Dhpy\varphi_y \right)\di x \di t
    \\
   =& \int_0^T\!\!\int_{\TT}
   \frac12 |D_h^c \rho^h|^2 \,\mathbf{I} \,: \nabla \boldsymbol{\varphi} \di x \di t
   \\
   &  + \int_0^T\!\!\int_{\TT}
  |D_h^c \rho^h|^2 \,  \mathcal{O}(h) \di x \di t
  \\
  & + \int_0^T \!\!\int_{\TT} R^h \di x \di t,
\end{align*}
where $R^h$ consists of products of first and second order differences of $\rho_h, \varphi_x, \varphi_y$, and at least one of $h_x$ or $h_y$ taken to the first or second power. All the error terms can be treated in the same way as those arising for $S_1$ using the bounds \eqref{eq:limitL2L2Drho}, so that we get 
\begin{align*}
	\int_0^T\!\!\int_{\TT}\mathrm{S}_2\di x \di t
	=&
	\int_0^T\!\!\int_{\TT}
	 \frac12 |D_h^c \rho^h|^2 \,\mathbf{I} \,: \nabla \boldsymbol{\varphi}  \di x + e_2^h(t, \boldsymbol\varphi) \di t
\end{align*} with $\|e_2^h(t,\varphi)\|_{L^1(0,T)}\to 0$ for $t \to 0$.

\noindent Finally, we write 
\begin{align*}
	&\int_0^T\!\!\int_{\TT}\mathrm{S}_3\di x \di t
	\\
	=&\int_0^T\!\!\int_{\TT}
	\left( \Dhcx\rho^h  + \frac{h_x}{2} \Dhmx \Dhpx \rho^h\right) \left( \Dhcx\rho^h  - \frac{h_x}{2} \Dhmx \Dhpx \rho^h\right) \Dhcx\varphi_x
	\\
	&\qquad
	+\left( \Dhcy\rho^h  + \frac{h_y}{2} \Dhmy \Dhpy \rho^h\right) \left( \Dhcy\rho^h  - \frac{h_y}{2} \Dhmy \Dhpy \rho^h\right) \Dhcy\varphi_y
	\\
	&\qquad
	+\Dhcx\rho^h\, \left( \Dhcy\rho^h  + \frac{h_y}{2} \Dhmy \Dhpy \rho^h\right)\Dhpy\varphi_x
	\\
	&\qquad
	+\Dhcy\rho^h\, \left( \Dhcx\rho^h  + \frac{h_x}{2} \Dhmx \Dhpx \rho^h\right)\Dhpx\varphi_y \di x \di t
\end{align*}
and proceed as above to find that 
\begin{align*}
	\int_0^T\!\!\int_{\TT}\mathrm{S}_3\di x \di t
	=&
	\int_0^T\!\!\int_{\TT}
D_h^c \rho^h \otimes D_h^c \rho^h \,: \nabla \boldsymbol{\varphi}  \di x + e_3^h(t, \boldsymbol\varphi) \di t
\end{align*}holds with $\|e_3^h(t,\varphi)\|_{L^1(0,T)}\to 0$ for $t \to 0$, which concludes the proof.
\end{proof}
Now we can proceed to show the weak convergence of approximations to dissipative weak solutions. To this end, we will  make use of auxiliary results from \cite{eiter2026}:

\begin{lemma}\label{lem:conv}
	Let $\kappa > 0, \mu \geq 0$, and let the initial condition to the system \eqref{method:semi2d} $(\rho_0,\bu_0)	\colon \TT \to \real^+ \times \real^2$ be measurable with $\mathcal E_{\text{NSK}}(\rho_0,\bu_0) < \infty$. Suppose that in addition to Assumption \ref{assump},
	\begin{equation}
		\label{eq:lambda.limit}
		\lim_{h\to 0}\lambda h 
		=\lim_{h\to 0} \frac{h}{\lambda}=0
	\end{equation}holds.
	Then there exist $(\rho,\bm)$ such that $\rho$ has a weak derivative $\nabla \rho$ in space, and  
	\begin{subequations}
		\label{eq:conv.ek}
		\begin{align}
			\rho^h&\xrightharpoonup{*}\rho &&\text{in } L^\infty(0,T;L^\gamma(\TT)),
			\label{eq:conv.rho.ws}
			\\        
			D^\bullet_h \rho^h&\rightharpoonup\nabla\rho &&\text{in } L^\infty(0,T;L^2(\TT;\real^2)),
			\label{eq:conv.drho.w}
			\\
			\rho^h&\to\rho&&\text{in } C([0,T];L^p(\TT)) \text{ for all }p\in[1,\infty),
			\label{eq:conv.rho.s}
			\\
			\rho^h \bu^h &\xrightharpoonup{*}\bm &&\text{in } L^\infty(0,T;L^{\frac{2\gamma}{\gamma+1}}(\TT;\real^2)),
			\label{eq:conv.rhou.ws}
		\end{align}
	\end{subequations}
	hold (up to subsequences) as $h\to 0$.\\
	If $\mu$ is strictly positive, then there exists $\bu\in L^2(0,T;H^1(\TT;\real^2))$ such that $\bm=\rho \bu$ and 
	\begin{subequations}
		\label{eq:conv.nsk}
		\begin{align}
			\bu^h&\rightharpoonup\bu &&\text{in } L^2(0,T; L^2(\TT;\real^2)),
			\label{eq:conv.u.w}
			\\
			D_h^+ \bu^h&\rightharpoonup \nabla \bu \qquad &&\text{in } L^2(0,T;L^2(\TT;\real^{2\times 2})),
			\label{eq:conv.du.w}   
		\end{align}
	\end{subequations}
	also hold up to subsequences as $h \to 0$.
\end{lemma}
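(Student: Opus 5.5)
The plan is to extract uniform bounds from the discrete energy inequality \eqref{eq:energydissipation}, apply Banach--Alaoglu to obtain weak-$*$ limits, and upgrade to strong convergence of $\rho^h$ by a discrete Aubin--Lions argument.

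\textbf{Uniform bounds and weak limits.} Integrating \eqref{eq:energydissipation} in time and using that the cell-average initial data satisfy $\Ensk^h(\rho_0^h,\bu_0^h)\le C\,\Ensk(\rho_0,\bu_0)$ (Jensen for the convex parts, plus a discrete gradient estimate for the $H^1$ part) gives the bounds of Lemma \ref{lem:energy-bounds}. We also get $\mu\|D_h^+\bu^h\|_{L^2L^2}^2+\kappa\lambda h\|D_h^2\rho^h\|^2_{L^2L^2}\le C$. Hölder together with the kinetic energy gives
\[
\|\rho^h\bu^h\|_{L^{2\gamma/(\gamma+1)}}\le\|\sqrt{\rho^h}\|_{L^{2\gamma}}\|\sqrt{\rho^h}\bu^h\|_{L^2},
\]
uniformly in $h$. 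Banach--Alaoglu then yields \eqref{eq:conv.rho.ws} and \eqref{eq:conv.rhou.ws}, and weak-$*$ limits $\mathbf{g}^\bullet$ of $D_h^\bullet\rho^h$ in $L^\infty L^2$.

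\textbf{Identifying the gradient limit.} To show $\mathbf{g}^\bullet=\nabla\rho$, test against smooth $\psi$ and use discrete integration by parts: $\int D_h^\pm\rho^h\,\psi=-\int\rho^h D_h^\mp\psi$, where $D_h^\mp\psi\to\nabla\psi$ uniformly. This holds for all three difference types, so all three limits coincide with $\nabla\rho$.

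\textbf{Strong convergence of $\rho^h$ (main obstacle).} The difficulty is \eqref{eq:conv.rho.s}, which needs time compactness. From \eqref{semi2drho}, $\partial_t\rho^h=-D_h^c\cdot(\rho^h\bu^h)+\lambda hD_h^2\rho^h$. Against a test function $\psi\in W^{2,\infty}$ this is bounded by $\|\rho^h\bu^h\|_{L^1}\|\psi\|_{W^{1,\infty}}+\lambda h\|\rho^h\|_{L^1}\|\psi\|_{W^{2,\infty}}$. Alternatively, one bounds the diffusion term by $\lambda h\|D_h^+\rho^h\|_{L^2}\|\psi\|_{W^{1,2}}$, which is uniform since $\lambda h\to0$ by \eqref{eq:lambda.limit}. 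This gives a uniform bound on $\partial_t\rho^h$ in $L^\infty(0,T;(W^{2,\infty})^*)$ and hence equicontinuity in a weak norm.

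Spatially, the bound on $D_h^+\rho^h$ in $L^\infty L^2$ gives uniform control of translates, $\|\rho^h(\cdot+z)-\rho^h\|_{L^2}\lesssim|z|+h$. A discrete Aubin--Lions/Kolmogorov--Riesz argument (as in \cite{eiter2026}) combines this spatial compactness with the time equicontinuity to give compactness in $C([0,T];L^2)$. Interpolating with a higher-integrability bound yields \eqref{eq:conv.rho.s} for all finite $p$. That bound comes from a discrete Sobolev embedding in 2D, which gives $H^1_h\subset L^p$ for all $p<\infty$. This step is the most delicate: it requires a careful discrete Sobolev/Gagliardo--Nirenberg inequality on the torus and control of the mean via mass conservation.

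\textbf{Viscous case $\mu>0$.} The bound on $\bu^h$ in $L^2L^2$ from Lemma \ref{lem:energy-bounds}, together with the bound on $D_h^+\bu^h$, gives weak limits $\bu$ and $\mathbf{G}$. Discrete integration by parts identifies $\mathbf{G}=\nabla\bu$ exactly as for $\rho$, so $\bu\in L^2H^1$. Finally, the strong convergence of $\rho^h$ in $C([0,T];L^p)$ combined with the weak convergence of $\bu^h$ in $L^2L^2$ shows $\rho^h\bu^h\rightharpoonup\rho\bu$ in $L^1$, hence $\bm=\rho\bu$. Throughout, the assumption $h/\lambda\to0$ is only needed to control the numerical diffusion in the compactness step, and $\lambda\ge\delta$ makes it harmless.
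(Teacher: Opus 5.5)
The paper gives no proof of Lemma~\ref{lem:conv}; it is imported from \cite{eiter2026}. Your proposal is a self-contained version of the standard compactness argument behind it, and the route is sound. It consists of:
energy bounds and Banach--Alaoglu;
identification of the limits of $D_h^\bullet\rho^h$ and $D_h^+\bu^h$ by exact summation by parts on the torus;
a dual-norm bound on $\partial_t\rho^h$ read off from \eqref{semi2drho}, combined with uniform $L^2$-translation estimates (Kolmogorov--Riesz and Arzel\`a--Ascoli), giving compactness in $C([0,T];L^2(\TT))$;
the two-dimensional discrete Sobolev inequality plus mass conservation to reach every $p<\infty$;
and strong-times-weak convergence for $\bm=\rho\bu$.
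Compared with the bare citation, your version shows where the hypotheses enter. The uniform lower bound $\underline{\rho}$ is what gives the $L^2$ bound on $\bu^h$, so $\bm=\rho\bu$ (with $\bu$ the weak limit of $\bu^h$) holds for $\mu=0$ as well. The numerical diffusion matters only through boundedness of $\lambda h$.

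Two points need correcting.

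First, the uniform bound on the initial discrete energy does not follow from Jensen for the kinetic part. The scheme's initialization \eqref{eq:initCon} averages $\bu_0$, not $\rho_0\bu_0$, and $(\rho,\bu)\mapsto\rho|\bu|^2$ is not convex. Write $\langle f\rangle_Q$ for the mean over a cell $Q$. Cauchy--Schwarz gives $\langle\rho_0\rangle_Q|\langle\bu_0\rangle_Q|^2\le\langle\rho_0\rangle_Q\langle\rho_0^{-1}\rangle_Q\langle\rho_0|\bu_0|^2\rangle_Q$, so you need $\langle\rho_0\rangle_Q\langle\rho_0^{-1}\rangle_Q$ bounded uniformly in $Q$ and $h$. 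This is true here: $\rho_0\ge\underline{\rho}$ gives $\log\rho_0\in H^1(\TT)$, and the scale-invariant Trudinger--Moser inequality on squares bounds this quantity by $C\exp(C\|\nabla\rho_0\|_{L^2}^2/\underline{\rho}^2)$. But it is a different argument from Jensen, and it is exactly the bound the paper takes for granted in Lemma~\ref{lem:energy-bounds}.

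Second, your closing remark misplaces \eqref{eq:lambda.limit}. Your compactness step uses only that $\lambda h$ stays bounded. Since $\lambda$ in \eqref{eq:lambda} depends on $t$, you should read this uniformly in $t$ to get your $L^\infty(0,T;(W^{2,\infty})^*)$ bound on $\partial_t\rho^h$. The condition $h/\lambda\to0$ is automatic from $\lambda\ge\delta$ and is not used anywhere in your argument. In the paper it is what turns the dissipation bound $\kappa\int_0^T\lambda h\|D_h^2\rho^h\|_{L^2}^2\,\mathrm{d}t\le C$ into $h\|D_h^2\rho^h\|_{L^2(0,T;L^2)}\to0$ in the consistency proof, not anything in this lemma.
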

\begin{theorem}\label{thm:conv}
	Suppose $(\ro^h, \yu^h)$ is a sequence of approximations to the NSK system generated by the method \eqref{semi2drho} on grids of size $h_n \to 0$ with the initial condition $(\rho_0,\bu_0)	\colon \TT \to \real^+ \times \real^2$ measurable with $\mathcal E_{\text{NSK}}(\rho_0,\bu_0) < \infty$, and let Assumption \ref{assump} as well as \eqref{eq:lambda.limit} hold. Then there are subsequences so that
	\begin{align*}
		&\rho^h \to \rho \in L^\infty(0, T; L^\gamma(\mathbb{T}^2)), \\
		&D_h^\bullet \rho^h \rightharpoonup \nabla \rho \in L^\infty(0, T; L^2(\mathbb{T}^2, \real^2)),\\ 
		&\yu^h \convws \yu \in L^\infty(0,T;L^2(\mathbb{T}^2, \real^2))\end{align*}
	hold. For $\mu > 0$,
	$$
	D_h^+ \yu^h \rightharpoonup \nabla \yu \in L^\infty(0, T; L^2(\mathbb{T}^2, \real^{2\times 2}))
	$$ also holds. The limit $(\rho, \yu)$ is a dissipative weak solution to the system  \eqref{eq_Korteweg} with suitable defect measures.\\ 
\end{theorem}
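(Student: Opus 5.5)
The proof combines the compactness from Lemma \ref{lem:conv} with the consistency theorem, and then identifies the defects through the standard Young-measure/convexity argument of \cite{feireisl2021numerics}.

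\textbf{Step 1: convergences.} Lemma \ref{lem:energy-bounds} together with Lemma \ref{lem:conv} gives the convergences for $\rho^h$ and $D_h^\bullet\rho^h$, and $\rho^h\bu^h\convws\bm$. Since $\rho^h\ge\underline\rho$, the sequence $\bu^h$ is bounded in $L^\infty(0,T;L^2)$ and has a weak-$*$ limit $\bu$. The strong convergence \eqref{eq:conv.rho.s} of $\rho^h$ in $C([0,T];L^p)$ then gives $\rho^h\bu^h\rightharpoonup\rho\bu$, so $\bm=\rho\bu$ also when $\mu=0$. For $\mu>0$ we additionally use \eqref{eq:conv.du.w}.

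\textbf{Step 2: passing to the limit in the consistency formulation.} We work in \eqref{NSK-consistency-mass} and \eqref{NSK-consistency-momentum}; the residuals $R_1,R_2$ vanish in $L^1(0,T)$. The following terms pass to the limit by weak–strong products:
\begin{itemize}
\item the mass flux and $\bm\cdot\partial_t\boldsymbol\varphi$ are linear in the weakly converging quantities;
\item $\mu D^c_h\bu^h:\nabla\boldsymbol\varphi$ converges because $D^c_h=\tfrac12(D^+_h+D^-_h)$ and a shift of $D^+_h\bu^h$ has the same weak limit;
\item $\rho^hD^c_h\rho^h\cdot\nabla\operatorname{div}\boldsymbol\varphi$ is a product of a strongly and a weakly converging factor.
\end{itemize}
The nonlinear terms converge only weakly-$*$ as measures:
\[
\rho^h\bu^h\otimes\bu^h\convws\overline{\rho\bu\otimes\bu},\quad p(\rho^h)\to p(\rho)\ \text{(strongly, via \eqref{eq:conv.rho.s} and the uniform $L^\gamma$ bound)},
\]
\[
D^c_h\rho^h\otimes D^c_h\rho^h\convws\overline{\nabla\rho\otimes\nabla\rho}.
\]
We define
\[
\mathfrak R=\big(\overline{\rho\bu\otimes\bu}-\rho\bu\otimes\bu\big)+\kappa\big(\overline{\nabla\rho\otimes\nabla\rho}-\nabla\rho\otimes\nabla\rho\big)+\tfrac{\kappa}{2}\operatorname{tr}\big(\overline{\nabla\rho\otimes\nabla\rho}-\nabla\rho\otimes\nabla\rho\big)I.
\]
Each bracket is positive semidefinite, since $\xi\mapsto(\xi\cdot a)^2$ and $(\bm,\rho)\mapsto|\bm\cdot a|^2/\rho$ are convex and hence weakly lower semicontinuous, tested against nonnegative test functions. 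The $L^\infty(0,T;\mathcal M^+)$ bound comes from the energy bound. Choosing test functions concentrating in time yields the identities for every $\tau$, and the weak continuity in time follows from the equations.

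\textbf{Step 3: energy inequality and compatibility.} We integrate \eqref{eq:energydissipation} in time and use $\Ensk^h(0)\le\Ensk(\rho_0,\bu_0)$. This follows from Jensen's inequality for the cell averages, and for the gradient term from $\int|D^+_h\rho_0^h|^2\le\int|\nabla\rho_0|^2$, which holds since averaged differences are averages of derivatives; if needed we assume $\rho_0\in H^1$ as implied by finite energy. We set
\[
\mathfrak E=\tfrac12\big(\overline{|\bm|^2/\rho}-|\bm|^2/\rho\big)+\tfrac\kappa2\big(\overline{|\nabla\rho|^2}-|\nabla\rho|^2\big)\ge0 .
\]
The pressure part has no defect because $\rho^h\to\rho$ strongly. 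One subtlety is that the energy contains $D^+_h\rho^h$ while the flux contains $D^c_h\rho^h$. Their difference $\tfrac h2D_h^2\rho^h$ vanishes in $L^2_{t,x}$ only if $h^2\|D_h^2\rho^h\|^2\to0$, whereas \eqref{eq:energydissipation} only gives $\lambda h\|D_h^2\rho^h\|^2_{L^2L^2}\le C$. Combined with $h/\lambda\to0$ from \eqref{eq:lambda.limit}, this yields $h^2\|D_h^2\rho^h\|^2\le (h/\lambda)\,C\to0$, so both differences produce the same defect. Weak lower semicontinuity of $\int_0^\tau\!\int|D^+_h\bu^h|^2$ gives the viscous term. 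With $d=2$ we have $\operatorname{tr}\mathfrak R=(\overline{|\bm|^2/\rho}-|\bm|^2/\rho)+2\kappa(\overline{|\nabla\rho|^2}-|\nabla\rho|^2)$, so $\mathfrak E\le\operatorname{tr}\mathfrak R\le 4\,\mathfrak E$, which is the compatibility condition with $c=1$, $C=4$. The energy inequality itself holds for a.e. $\tau$ after testing with time cut-offs.

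\textbf{Main obstacle.} I expect the hardest part to be making the defect identification rigorous. This means matching the forward-difference energy with the central-difference fluxes, as above, and justifying that the kinetic defect in the momentum flux, where the velocity is only weakly convergent, is dominated by the energy defect. For the latter I would first try writing $\rho^h\bu^h\otimes\bu^h=\bm^h\otimes\bm^h/\rho^h$ and using the strong convergence of $\rho^h$ together with $\rho^h\ge\underline\rho$.
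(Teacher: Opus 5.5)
Your proposal is correct. It has the same skeleton as the paper's proof: compactness from Lemma~\ref{lem:conv}, passage to the limit in the consistency formulation, convexity for the sign of the defects, and the trace identity for compatibility. Several steps are carried out differently.

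\textbf{Positivity of the defects.} The paper introduces a Young measure and splits $\mathfrak R_K$ and $\mathfrak E'$ into concentration and oscillation defects, then invokes the comparison principle and Jensen's inequality as in \cite{feireisl2021numerics}. You instead test with $\psi\ge 0$ and use weak lower semicontinuity of the convex functionals $g\mapsto\int\psi\,(a\cdot g)^2$ and $(\bm,\rho)\mapsto\int\psi\,(a\cdot\bm)^2/\rho$. This yields positive semidefiniteness directly and is shorter; the finer Young-measure decomposition is not needed for the conclusion.

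\textbf{Two justifications the paper omits.} First, your strong--weak product argument gives $\bm=\rho\bu$ when $\mu=0$, and so actually proves the stated $\bu^h\convws\bu$; the paper merely sets $\bu=\bm/\rho$. Second, you observe that the discrete energy is built from $D_h^+\rho^h$ while the flux uses $D_h^c\rho^h$. Identifying the two quadratic weak-$*$ limits then needs $h/\lambda\to 0$, i.e.\ essentially the bound \eqref{eq:limitL2L2Drho}. The paper's one-line ``obvious'' compatibility claim tacitly relies on this, at least for the lower bound $c\,\mathfrak E\le\operatorname{tr}\mathfrak R$.

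\textbf{Explicit defects.} Your explicit $\mathfrak R$ and $\mathfrak E$, including the convective defect, give $2\mathfrak E\le\operatorname{tr}\mathfrak R\le 4\mathfrak E$, so your choice $c=1$ is valid but not sharp.

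Two small corrections.

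\emph{The ``main obstacle'' is not one.} The trace commutes with weak-$*$ limits, so $\operatorname{tr}\overline{\rho\bu\otimes\bu}=\overline{\rho|\bu|^2}$. The kinetic parts of $\operatorname{tr}\mathfrak R$ and $2\mathfrak E$ therefore coincide exactly.

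\emph{The initial-energy bound does not follow from Jensen for the kinetic term.} The bound $\Ensk^h(0)\le\Ensk(\rho_0,\bu_0)$ via Jensen is fine for $P$ and for the capillary term. But \eqref{eq:initCon} averages the velocity, not the momentum, and $(\rho,\bu)\mapsto\rho|\bu|^2$ is not jointly convex. For example, two half-cells with $(\rho,u)=(1,0)$ and $(\epsilon,1)$ give $\bar\rho\,\bar u^2=(1+\epsilon)/8>\epsilon/2=\overline{\rho u^2}$ for $\epsilon<1/3$. Jensen would work for averaged momenta via $|\bm|^2/\rho$. With averaged velocities you need a separate argument that $\limsup_{h\to0}\Ensk^h(0)\le\Ensk(\rho_0,\bu_0)$. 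For instance, bound $|\bu_0^h|^2$ by the cell average of $|\bu_0|^2$ and use dominated convergence if $\rho_0\in L^\infty$. The paper is silent on this point as well.
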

\begin{proof}
	The existence of the required limits is given by Lemma \ref{lem:conv}. Note that for the non-viscous case $\mu = 0$,  we can just define $\yu$ as the quotient $\m / \rho$ almost everywhere on $[0,T] \times \TT$. 
	In passing to the limit towards the equations \eqref{eq:dw_continuity} -- \eqref{eq:dw_energy}, the proof proceeds along the lines of the treatment of the Navier-Stokes system in Chapter 5 in \cite{feireisl2021numerics}, and as above, we will omit the treatment of the continuity equation and of the Navier-Stokes terms in the momentum and energy equation already performed therein.  \\
	To pass to the limit of \eqref{eq:dw_momentum} from the consistency equation \eqref{NSK-consistency-momentum}, it then remains to show that:
	\begin{itemize}
		\item We have the weak convergence (up to subsequences)
		$\rho^h D^c_h\rho^h  \rightharpoonup \rho  \nabla\rho
		 \in L^{\infty}(0,T;L^1(\TT;\mathbb{R}^{2}))$;
		
		\item The limit (up to subsequences)
		\begin{align*}
		D^c_h\rho^h \otimes D^c_h\rho^h
		+ \frac12 |D^c_h\rho^h|^2 \, \mathbf I \convws& \,\overline{\nabla\rho \otimes \nabla\rho
			+ \frac12 |\nabla\rho|^2 \, \mathbf I} 
			\\
			&\in L^{\infty}(0,T;
		\mathcal{M}(\TT;\mathbb{R}^{2\times 2}))
		\end{align*}
			exists, and for the associated defect we have
			\begin{align*}
			\mathfrak{R}_K \coloneqq&  \overline{\nabla\rho \otimes \nabla\rho + \frac12 |\nabla\rho|^2 \, \mathbf I} - \left( \nabla\rho \otimes \nabla\rho + \frac12 |\nabla\rho|^2 \, \mathbf I\right)  
			\\
			&\in L^{\infty}(0,T;	\mathcal{M^+}(\TT;\mathbb{R}^{2\times 2}_{\text{sym}})).
		\end{align*}

	\end{itemize}
	Indeed, the weak convergence of $\rho^h D^c_h\rho^h$ follows directly from the strong convergence of $\rho^h$ and weak convergence of $D^c_h\rho^h$, both in $L^{\infty}(0,T;L^2(\TT;\mathbb{R}^{2}))$.\\
	The existence of the weak-$\ast$ limits follows by the Banach-Alaoglu theorem since $D^c_h\rho^h \otimes D^c_h\rho^h$ and $\frac12 |D^c_h\rho^h|^2 \, \mathbf I$ are bounded in $L^1(\TT)$, as was shown in the consistency proof above. From the $L^\infty L^1$ bounds we can also deduce the existence of a family of Young measures $ \mathcal{V}  = \lbrace \mathcal{V}\rbrace_{(t,x) \in [0,T]\times \TT} \in L^\infty_{w-\ast}([0,T]\times \TT, \mathcal{P}(\real\times\real^2))$ so that we have $\rho =
	\left\langle \mathcal{V}; \tilde \rho\right\rangle$, $\nabla\rho =  \left\langle \mathcal{V}; \ppsi \right\rangle$. We write the defect $\mathfrak{R}_K$ as the sum of the concentration and oscillation defects associated with that Young measure:

\begin{align*}	
\mathfrak{R}_K =& \left( \overline{\nabla\rho \otimes \nabla\rho
		+ \frac12 |\nabla\rho|^2 \, \mathbf I} - \left\langle \mathcal{V}; \ppsi \otimes \ppsi
		+ \frac12 |\ppsi|^2 \, \mathbf I\right\rangle \right) \\
		&+ \left( \left\langle \mathcal{V}; \ppsi \otimes \ppsi	+ \frac12 |\ppsi|^2 \, \mathbf I\right\rangle
		- \nabla\rho \otimes \nabla\rho		- \frac12 |\nabla\rho|^2 \, \mathbf I\right) \\
		=& \mathfrak{R}_{K,c}+\mathfrak{R}_{K,o}
\end{align*}
Observing that for any $\xi \in \real^2$, the function $\ppsi \mapsto \left( \xi \otimes \xi \right) :   \left( \ppsi \otimes \ppsi	+ \frac12 |\ppsi|^2\right)  $
is  lower semicontinuous and convex,  we can employ the same arguments as in Chapter 5.3  in \cite{feireisl2021numerics}
to deduce via the comparison principle for concentration defects and Jensen's inequality respectively that $\mathfrak{R}_{K,c}$ and $\mathfrak{R}_{K,o}$ both take values in the positive semidefinite matrices as required.\\[0.3cm]
To derive \eqref{eq:dw_energy}, it remains to show (with the viscous term already taken care of) that we can pass to the limit in the energy inequality \eqref{eq:energydissipation} (integrated over time) with a suitable defect, i.e:
$$\ENSK(\rho^h,\bu^h) \convws \overline{\ENSK(\rho,\bu)} \in  L^{\infty}(0,T;	\mathcal{M}(\TT;\mathbb{R}))$$
up to a subsequence, and
$$\mathfrak{E}'\coloneqq \overline{\ENSK(\rho,\bu)} - \ENSK(\rho,\bu) \in L^{\infty}(0,T;	\mathcal{M}^+(\TT;\mathbb{R})).
$$
Existence of $\overline{\ENSK(\rho,\bu)}$  follows from the energy stability of the scheme, and for the suitability of the defect $\mathfrak{E}'$ we can again argue as in \cite{feireisl2021numerics}, splitting $\mathfrak{E}'$ into a concentration and oscillation defect associated with a Young measure, since with the addition of the capillary term $\frac{\kappa}{2}|\nabla \rho|^2$, $\ENSK$ remains continuous and convex as a function of the variables $\rho, \nabla \rho,$ and $\rho \yu$.\\
Finally, the compatibility of the newly introduced defects has to be shown, i.e. 
$$c\,\mathfrak{E}' \leq \operatorname{tr}[\mathfrak{R_1}]\leq C\,\mathfrak{E}' \txt{for some $0<c\leq C$.}$$
This however is obvious from $\operatorname{tr}\left[ \nabla\rho \otimes \nabla\rho + \frac12 |\nabla\rho|^2 \, \mathbf I\right] = 2|\nabla\rho|^2$.

\end{proof}

\small
\bibliographystyle{abbrv}
\bibliography{literature}
\end{document}